\newcommand{\NDs}{$2N^*$ }
\newtheorem{proposition}{Proposition}
\theoremstyle{remark}
\newtheorem{remark}{Remark}
\theoremstyle{definition}
\newtheorem{definition}{Definition}
\theoremstyle{remark}
\newtheorem{example}{Example}
\begin{document}

 \title{New third order low-storage SSP explicit Runge--Kutta methods\footnote{This work was partially supported by the 
Spanish Research Grant MTM2016-77735-C3-2-P.}}

 \author{I.~Higueras$^\dagger$~and
 T.~Rold\'an\thanks{Departamento de Estad\'istica,  Inform\'atica y Matem\'aticas, Universidad P\'ublica de Navarra,
Campus de Arrosadia, 31006 Pamplona (SPAIN).
Email: higueras@unavarra.es, teo@unavarra.es}}
\date{\today}

\maketitle

\begin{abstract} 
When a high dimension  system of ordinary differential equations is solved numerically,  the computer memory capacity may be compromised. Thus, for such systems, it is important to incorporate low memory usage  to
some other properties of the scheme.  In the context of strong stability preserving (SSP) schemes, some 
low-storage methods have been considered in the literature. In this paper we study 5-stage third order $2N^*$ low-storage SSP  explicit Runge-Kutta schemes. These are SSP schemes that can be implemented with  $2N$ memory registers, where  $N$ is the dimension of the problem, and retain 
the previous time step approximation. This last property is crucial for a variable step size implementation of the scheme.  In this paper, first we show that the optimal SSP methods cannot be implemented with \NDs memory registers.  Next, two non-optimal SSP $2N^*$ low-storage methods are constructed; although their SSP coefficients are not optimal, they achieve some other interesting properties.  Finally, we show some numerical experiments. 
\end{abstract}

\section{Introduction}
Given an initial value problem of the form  
\begin{eqnarray}
   \frac{d}{dt} y(t)&=&f(y(t))\, , \qquad t\geq t_{0}\, , \label{ode}\\
   y(t_{0})&=& y_{0}\, ,\nonumber
   \end{eqnarray}
a common class of schemes to solve it are explicit Runge-Kutta (RK) methods. 
An $s$-stage explicit RK method is defined by a strictly lower triangular $s\times
s$  matrix  
${\cal A}$  and a  vector $b\in 
\mathbb{R}^s$. If
$y_{n}$ is the 
numerical approximation of the solution $y(t)$ at $t=t_{n}$, we obtain 
$y_{n+1}$, the numerical approximation of the solution at 
$t_{n+1}=t_{n}+h$, from
\begin{eqnarray}
   Y_{i}&=&y_{n}+ h \sum_{j=1}^{i-1}a_{ij}f(Y_{j})\, ,    \qquad 1\le i \le s\,,
   \label{rk2}  \\[1.5ex] 
    y_{n+1}&=&y_{n}+ h \sum_{i=1}^s b_{i} f(Y_{i})\, , 
   \label{rk1}
       \end{eqnarray}
where the internal 
stage $Y_{i}$ approximates $y(t_{n}+ c_{i} 
h)$, 
%$i=1,\ldots, s$;  
and, as usual, $c_{i}=\sum_{j=1}^{s-1}a_{ij}$. 

A naive implementation of  a standard explicit RK  method requires $s+1$ memory
registers of length $N$,  where $N$ is the dimension of the differential
problem  \eqref{ode}. For systems    with a large number of equations, the
high dimension of the problem \eqref{ode}  compromises the computer memory
capacity and  thus it is important to incorporate low memory usage  to
some other properties of the scheme. These ideas have been developed, e.g.,
in
\cite{calvo2012,koch2011imex,kennedy2000low,ketcheson2008highly,ketcheson2010runge,van1977construction,williamson1980low},
where different low-storage RK methods
have been constructed. The most commonly used low-storage implementations
are the ones by van der Houwen \cite{van1977construction} and Williamson
\cite{williamson1980low}.

Other kinds of low-storage methods have been studied in the context of
strong stability preserving (SSP) schemes
\cite{gottlieb1998tvd,gottlieb2001strong,ketcheson2008highly,ruuth2006global,spiteri2002nco}.
These
methods were introduced in \cite{ShOs} to  ensure numerical monotonicity for
problems    whose solutions
satisfy a monotonicity property for the forward Euler method. Sometimes it is
convenient to write SSP explicit RK methods in the \mbox{Shu-Osher} form, particularly when the  sparse
structure of the Shu-Osher matrices 
allows       an efficient implementation with low cost of
memory usage. 
In this way, in \cite{ketcheson2008highly,ruuth2006global} it is proven that some
optimal SSP schemes can be implemented with  $2N$ memory registers. However, in most of
the cases, this implementation does not keep the previously computed numerical
solution and thus, if the method is implemented with variable step size, an
additional memory register is required. In \cite{ketcheson2008highly} a deep
analysis is done and low-storage methods that retain the computed approximation at
the previous time step are studied. These methods are denoted by $2N^*$ and it is found
that first and second order optimal SSP methods are $2N^*$ methods. However, for
third order schemes, only the 3 and 4 stage ones are $2N^*$~methods. On the other hand, third order
optimal SSP methods with $s=k^2$ stages, $k>2$, are just  $2N$ low-storage methods. Besides SSP properties, robust explicit RK schemes  should also have some additional stability properties.  Although the 4-stage third order optimal SSP scheme can be implemented as a   $2N^*$ scheme, this method is unique and all its additional properties are determined.

In this paper we consider 5-stage third order SSP explicit RK methods and exploit their sparse
structure in order to get   schemes that can be implemented as $2N^*$ methods.
Although their SSP coefficients are not optimal, they have some other additional
relevant properties.

The rest of the paper is organised as follows. In section \ref{sec:SSP} we give a
brief introduction to SSP RK methods. Section \ref{sec:2N} is devoted to review
 low-storage methods that can be implemented in two memory registers. The particular structure of optimal 
5-stage third order SSP methods is analysed in section \ref{sec:ssp53}. There we see that
these methods cannot be implemented in two memory registers. In section
\ref{sec:SSP532N} we   obtain numerically some new optimal SSP explicit RK methods that can be
implemented as $2N^*$ methods.  Although their SSP  coefficients
are not optimal, they have  other remarkable properties. Some
numerical experiments show the efficiency of these new schemes in section
\ref{sec:numexp}.

\section{Strong Stability Preserving Runge-Kutta methods}\label{sec:SSP}
In this section we review some known concepts on SSP RK methods that will be used in this paper. 
These methods are relevant for dissipative problems \eqref{ode}, that is, problems such that the exact solution satisfies 
a monotonicity property of the form
\begin{equation}    \|y(t)\|\leq \|y(t_{0})\| \, , \qquad 
    \hbox{for all } \, t\geq t_{0}\, , \label{diss2}
\end{equation}
where  
$\|\cdot\|:\mathbb{R}^N\to \mathbb{R}$ denotes a convex functional, e.g., a 
norm or a semi-norm. A sufficient condition for \eqref{diss2} is monotonicity under  forward Euler steps  
\begin{equation}
    \left \|\, y +  \,  h \, f (y )  \, \right \| \leq 
   \|\,  y \, \|\, , \qquad \hbox{for } h \leq \Delta t_{FE} , \label{circle2}
    \end{equation}
for all $y
    \in \mathbb{R}^N$ and a fixed $\Delta t_{FE} >0$ (see, e.g., \cite[p. 501]{Kra1} or \cite[p. 1-2]{Higueras2006} for details). 
 
As $Y_i$ approximates $y(t_n+ c_i h)$ and usually $c_i\geq 0$, for dissipative problems  it makes sense  to require  numerical monotonicity, not only for the numerical solution, but also for the internal stages, that is,  
\begin{equation} \|Y_{i}\| \leq \|y_{n}\| \, , \qquad i=1\,, \ldots, 
s\, ,  \qquad \qquad  \|y_{n+1}\|\leq \|y_{n}\|\,   , \label{stages-lin} 
    \end{equation}
for all $\, n\geq 0$,   probably under a stepsize restriction $h\leq \Delta t_{_\text{MAX}}$.  The seminal papers by Spijker \cite{spijker1983contractivity,spijker1985stepsize,spijker2008stepsize} and Kraaijevanger \cite{Kra1,Kra2} on numerical contractivity issues for RK schemes, settle a theoretical framework that is valid not only for contractivity   but also  for monotonicity.  

With a different terminology and notation, the numerical preservation of monotonicity has also been investigated in the context of hyperbolic systems of conservation laws. In this setting, for different reasons, it is critical to deal with Total Variation Diminishing (TVD) schemes, and in the pioneering papers \cite{Shu1988a,ShOs}, monotonicity issues for the Total Variation semi-norm are analysed. In these references,
    high 
order methods satisfying   (\ref{stages-lin})
when the forward Euler 
discretization of (\ref{ode}) satisfies \eqref{circle2}
are studied.    
In this context, these methods are known as  SSP methods. 

The idea in \cite{Kra2,Shu1988a,ShOs} is to construct high order schemes by means of 
convex combinations of  forward Euler steps. Thus, RK schemes \eqref{rk2}-\eqref{rk1}, that in compact form are written as
\begin{align}
Y=e\otimes y_n + (\mathbb{A} \otimes I_N ) F(Y)\,, 
\end{align}
with $Y=(Y_{1}, \ldots, Y_s, y_{n+1})^t\in \mathbb{R}^{(s+1)N}$, $F(Y)=(f(Y_1), \ldots, f(Y_s), 0)^t\in \mathbb{R}^{(s+1)N}$, and 
\begin{align}\label{eq:A}
\mathbb{A}=\left(\begin{array}{cc} {\cal A} & 0 \\ b^t & 0 \end{array}\right)\, ,  
\end{align}
can be expressed as
\begin{align}\label{eq:ssp}
Y =\alpha_r\otimes y_n +  (\Lambda_r \otimes I_N) \left(Y + \frac{h}{r} F(Y)\right)\, , 
\end{align}
where $r\in \mathbb{R}$ and 
\begin{align}\label{eq:sspgen0}
\alpha_r=(I+r \mathbb{A})^{-1} e\, , \qquad \Lambda_r= r (I+r \mathbb{A})^{-1}\mathbb{A}\, . 
\end{align}
If $\alpha_r\geq 0$ and $\Lambda_r\geq 0$, where the inequalities should be understood component-wise, then the right hand side of \eqref{eq:ssp} is a convex combination of $y_n$ and forward Euler steps. 
 The radius of absolute monotonicity, also known as Kraaijevanger's coefficient or SSP coefficient is defined by
\begin{align}\label{eq:kra}
R(\mathbb{A})=\sup\left\{ r \, | \, r=0 \hbox{ or } r>0, (I+r \mathbb{A})^{-1} \hbox{ exits, and }\alpha_r\geq 0, \Lambda_r\geq 0 \right\}\, . 
\end{align}
 If the forward Euler method satisfies condition
 (\ref{circle2}), then, from  (\ref{eq:ssp}), numerical monotonicity \eqref{stages-lin} can be proven under the step size restriction
\begin{equation*}
h\leq R(\mathbb{A})\, \Delta t_{FE}\, . \label{c}
\end{equation*}
In this paper, SSP($s$,$p$) will denote $s$-stage $p$-th order SSP schemes.
Optimal SSP($s$,$p$) methods, in the sense that their SSP coefficient is the largest possible one for a given number of stages $s$ and order $p$, are well known in the literature (see, e.g.,  \cite{GotKetShuBook}).

\begin{remark}
If $A=(a_{ij})$ and $b=(b_j)$ in \eqref{eq:A}, a necessary condition for $R(\mathbb{A})> 0$ is $a_{ij}\geq 0$, and $b_j>0$  \cite[Theorem 4.2]{Kra2}. In this paper we assume that this sign condition holds.    \hfill$\square$
\end{remark}

\subsection{Shu-Osher representations}
Expression \eqref{eq:ssp} is a particular case of Shu-Osher representations of a RK method (see, e.g.,  \cite[Section 2]{HiRep}). Given a RK method with Butcher matrix $\mathbb{A}$, a representation is given in terms of  two matrices $(\Lambda, \Gamma)$ such that the matrix $I-\Lambda$ is invertible and $\mathbb{A}=(I-\Lambda)^{-1} \Gamma$; the numerical approximation of the RK scheme is written as
\begin{align}\label{eq:sspgen}
Y= \alpha\otimes y_n +  (\Lambda \otimes I_N)  Y +  h (\Gamma \otimes I_N)  F(Y)\, , 
\end{align}
where $\alpha=(I-\Lambda) e$. It is well known that the representation of a  RK method is not unique.

For explicit RK methods, $Y_1=y_n$  and thus the elements $\alpha_i\,, i=2,\ldots,s+1$\,,  in \eqref{eq:sspgen} can be added to the first column of the matrix $\Lambda$.  In this way, we obtain an equivalent Shu-Osher representation with $\alpha=(1, 0, \ldots, 0)^t$ given by
  \begin{eqnarray}
     Y_1&=&y_{n}\, , \nonumber \\
     Y_i&=& \sum_{k=1}^{i-1} \left(\lambda_{ik} Y_k + h
     \, \gamma_{ik}  f\left(Y_K\right)\right) \, , \quad \quad 
     i=2, \ldots, s+1\, ,   \label{metodoalfas} \\
     y_{n+1}&=&Y_{ s+1 }\, , \nonumber 
     \end{eqnarray}
 where $\Lambda=(\lambda_{ij})$, with  $\sum_{k=1}^{i-1}\lambda_{ik}=1$, and $\Gamma=(\gamma_{ij})$. Below we give two definitions about Shu-Osher representations.

\begin{definition}\label{SO_can}
We say that a Shu-Osher representation $(\Lambda, \Gamma)$ of an explicit RK method  is canonical  if $\alpha=(1, 0, \ldots, 0)^t$.   \hfill $\square$
\end{definition}

Adding and subtracting the term $r (\Gamma  \otimes I_N) Y$, it is possible to write \eqref{eq:sspgen} as
\begin{align}\label{eq:sspgen2}
Y= \alpha\otimes y_n +  \left((\Lambda  - r \, \Gamma  ) \otimes I_N\right)  Y + r (\Gamma  \otimes I_N)  \left(Y + \frac{h}{r} F(Y)\right)\, . 
\end{align}
For $r={\cal R}(\mathbb{A})$, it can be proven \cite[Proposition 2.7]{HiRep}  that there exist  Shu-Osher representations $(\Lambda , \Gamma )$   such that $\mathbb{A}=(I-\Lambda )^{-1} \Gamma $ and 
\begin{align}\label{eq:con_rep}
\Lambda \geq 0\, , \quad \Gamma  \geq 0\, , \quad \alpha \geq 0\, , \quad \Lambda  - r \, \Gamma  \geq 0\, .
\end{align}
For these representations, the right hand side of equation \eqref{eq:sspgen2} is a convex combination of $y_n$, the internal stages and forward Euler steps. 
Observe that the largest value $r$ in \eqref{eq:con_rep} that satisfies $\Lambda  - r \, \Gamma  \geq 0$ is given by  
\begin{align}\label{eq:sspSOcoef}
r=\min_{ij} \frac{\lambda_{ij}}{\gamma_{ij}}\, , 
\end{align}
that agrees with the SSP coefficient of a RK method defined in the context of TVD schemes (see, e.g., \cite{Shu1988a}; see too  \cite{GotKetShuBook} and the references therein). In other words, these representations are optimal. 

\begin{definition}
Given a RK method with Butcher matrix $\mathbb{A}$,  and a Shu-Osher representation   $(\Lambda, \Gamma)$ such that $\mathbb{A}= (I-\Lambda)^{-1} \Gamma$, we   say that the representation $(\Lambda, \Gamma)$ is optimal if $r$ in \eqref{eq:sspSOcoef} is equal to $R(\mathbb{A})$.     
\end{definition}

\begin{example}\label{ex:opt_rep}
Given a RK method $\mathbb{A}$, consider $r=R(\mathbb{A})$, the vector $\alpha_r$ and the matrix $\Lambda_r$ in \eqref{eq:sspgen0},  and define $\Gamma_r:= \Lambda_r/r$.
%$\Gamma_r:= (I+r \mathbb{A})^{-1}\mathbb{A}=\Lambda_r/r$.
For the Shu-Osher representation $(\Lambda_r, \Gamma_r)$ conditions \eqref{eq:con_rep} are fulfilled and thus  it is an optimal representation. Observe that, in this case,  $\Lambda_r-r \Gamma_r=0$  and thus \eqref{eq:sspgen2} is reduced to \eqref{eq:ssp}. \hfill $\square$
\end{example}

As it has been pointed out,  given a RK method $\mathbb{A}$, in general, there is not a unique optimal representation. The proof of Proposition 2.7 in \cite{HiRep} gives the required conditions to obtain optimal representations. More precisely, if $r=R(\mathbb{A})$, 
an optimal representation $(\Lambda, \Gamma)$ can be constructed by choosing a matrix $\Lambda$ such that the following inequalities hold,
\begin{subequations}
 \begin{align}
&   r (I+r \mathbb{A})^{-1}  \mathbb{A}e \leq \Lambda e \leq e\, , \label{des2}\\
&   r \mathbb{A} (I+r \mathbb{A})^{-1}  \mathbb{A}  \leq \Lambda \mathbb{A} \leq \mathbb{A}\, ,  \label{des3}  
 \end{align}
\end{subequations}
by defining  a matrix $\Gamma$ as $\Gamma:=(I-\Lambda)  \mathbb{A}$, and imposing that
 \begin{align}\label{des4}
\Lambda\geq 0 \, , \quad  \Lambda - r \Gamma\geq 0\, .
\end{align}
With this process, usually the optimal representation is not completely determined and some  additional conditions can be imposed on the coefficients.  In section \ref{subsec:ssp53} we will use this process to construct optimal low-storage representations of an explicit RK method.

\begin{remark} \label{inv}
As it is pointed out in  \cite{ketcheson2010runge}, given a  Shu-Osher representation, the RK method is invariant under the transformation (for any $t$ and $i,j>1$)
\begin{subequations}\label{invv}%
\begin{alignat}{4}%
& \gamma_{ik}\Rightarrow \gamma_{ik} +t \gamma_{jk} \, , \label{inv1}\\
& \lambda_{ik}\Rightarrow \lambda_{ik} +t \lambda_{jk}\, , \qquad  k\neq j\, ,  \label{inv2}\\
& \lambda_{ij}\Rightarrow \lambda_{ij} - t\, .  \label{inv3} 
\end{alignat}%%
\end{subequations}%
In section \ref{subsec:ssp53}  we will consider this invariance property. \hfill $\square$
\end{remark}

For a detailed study on numerical monotonicity and SSP methods, see, e.g.,  \cite{ferracina:1073,FeSpi_aplnum,gottlieb2009high,Higueras2006,KeMcGo,ketcheson2008highly,Kra1,spiteri2003non}. Efficient SSP RK methods have also been analysed in  
\cite{gottlieb1998tvd,gottlieb2001strong,ruuth2006global,ShOs,spiteri2002nco}; see too \cite{GotKetShuBook} and the references therein.

\subsection{Optimal SSP methods}\label{sec:optSSP}
In this section we review some well known optimal explicit RK SSP methods  (see, e.g., \cite{Kra1}). We are particularly interested in the sparse structure of the optimal canonical Shu-Osher representations. 

Optimal SSP($s$,$1$) methods have SSP coefficient  $r=s$.  
The corresponding   Butcher coefficients $({\cal A},b)$  are \[
a_{ij}=\frac{1}{s}\,, \ 1\le j<i \le s\,; \quad b_{i}=\frac{1}{s}\,,\ 1\le i \le s\,,
\]
and the optimal canonical Shu-Osher form for these schemes is
\begin{equation}\label{ssp1so} 
\Lambda= \left(
\begin{array}{ccccc}
0 & 0 & \cdots & \cdots  & 0   \\
1 & 0 & \ddots & \ddots  & \vdots   \\
0 & 1 & \ddots &  \ddots &   \vdots  \\
\vdots & \ddots & \ddots & \ddots & \vdots   \\
0 & \cdots & 0 & 1 & 0     \\
\end{array}
\right)\, ,\qquad  \Gamma=
\left(\begin{array}{ccccc}
0 & 0 & \cdots & \cdots & 0   \\
\frac{1}{s}  & 0 & \ddots & \ddots & \vdots   \\
0 &  \frac{1}{s}  & \ddots &  \ddots & \vdots   \\
\vdots & \ddots & \ddots & \ddots & \vdots   \\
0 & \cdots & 0 &  \frac{1}{s}  & 0     \\
\end{array}
\right)\, . 
\end{equation}
Observe that they have non trivial entries just in the first
subdiagonal.
Optimal SSP($s$,$2$) methods have  SSP coefficient  $r=s-1$ and their Butcher coefficients $({\cal A},b)$  are 
\[
a_{ij}=\frac{1}{s-1}\,, \quad \ 1\le j<i \le s\,; \qquad b_{i}=\frac{1}{s}\,,\quad \ 1\le i \le s\,.
\] 
The optimal canonical  Shu-Osher representation is
\begin{equation}\label{2eo3SO}
\Lambda= \left(
\begin{array}{ccccc}
0 & 0 & \cdots & \cdots & 0   \\
1 & 0 & \ddots & \ddots & \vdots   \\
0 & \ddots & \ddots &  \ddots & \vdots   \\
\vdots & \ddots & 1 & \ddots & \vdots   \\
\frac{1}{s} & \cdots & 0 & \frac{s-1}{s} & 0     \\
\end{array}
\right)
\, ,\qquad  \Gamma=
\left(\begin{array}{ccccc}
0 & 0 & \cdots & \cdots & 0   \\
\frac{1}{s-1} & 0 & \ddots & \ddots & \vdots   \\
0 & \ddots & \ddots &  \ddots & \vdots   \\
\vdots & \ddots & \frac{1}{s-1} & \ddots & \vdots   \\
0 & \cdots & 0 & \frac{1}{s} & 0     \\
\end{array}
\right)\, . 
\end{equation}
The sparse structure of these matrices is quite similar to the one in \eqref{ssp1so},
where only the first subdiagonals are nontrivial, but now the first column of  $\Lambda$ contains an element different from zero, namely
$\lambda_{s+1,s}=1/s$.

With regard to third order schemes, the optimal  SSP(3,3) method
has SSP coefficient  $r=1$. Below we show the Butcher tableau and the Shu-Osher
matrices for this method.
\begin{equation}\label{3eo3}
\hbox{ 
\tabcolsep 5pt
\begin{tabular}{c|ccc}
 $0$  &  $0$ & $0$&  $0$    \\ [0.25ex]
$1$  & $1$ & $0$ &  $0$  \\ [0.5ex]
 $\frac{1}{2}$  & $\frac{1}{4}$ & $\frac{1}{4}$  &  $0$   \\[0.5ex]
\hline
    & $\frac{1}{6}$ & $\frac{1}{6}$  & $\frac{2}{3}$ 
\end{tabular} \,,     
}  \qquad \qquad
\Lambda=
\left(
\begin{array}{cccc}
0 & 0 & 0 & 0  \\[0.25ex]
1 & 0 & 0 & 0 \\[0.25ex]
\frac{3}{4}  &  \frac{1}{4}  & 0 & 0  \\[0.25ex]
\frac{1}{3} & 0 & \frac{2}{3} & 0  
\end{array}
\right)\,,\qquad
\Gamma=\left(
\begin{array}{cccc}
0 & 0 & 0 & 0  \\[0.25ex]
1  & 0 & 0 & 0  \\[0.25ex]
0 & \frac{1}{4} & 0 & 0  \\[0.25ex]
0 & 0 & \frac{2}{3} & 0   
\end{array}
\right)\,.
\end{equation}
These are the coefficients of the optimal SSP(4,3)  method, with  SSP coefficient  $r=2$.
\begin{equation}\label{2N3ordern2}
\hbox{ 
\tabcolsep 5pt
\begin{tabular}{c|cccc}
 $0$  &  $0$ & $0$&  $0$&  $0$     \\ [0.25ex]
$\frac{1}{2}$  & $\frac{1}{2}$ & $0$ &  $0$ &  $0$ \\ [0.6ex]
$1$  & $\frac{1}{2}$ & $\frac{1}{2}$  & $0$& $0$   \\[0.6ex]
$\frac{1}{2}$  & $\frac{1}{6}$ & $\frac{1}{6}$  & $\frac{1}{6}$& $0$   \\[0.6ex]
\hline
    & $\frac{1}{6}$ & $\frac{1}{6}$  & $\frac{1}{6}$&  $\frac{1}{2}$
\end{tabular}      
}  \qquad \qquad
\Lambda=
\left(
\begin{array}{ccccc}
0 & 0 & 0 & 0 & 0 \\
1 & 0 & 0 & 0 & 0 \\
0 & 1 & 0 & 0 & 0 \\
\frac{2}{3} & 0 & \frac{1}{3} & 0 & 0 \\
0 & 0 & 0 & 1 & 0 \\
\end{array}
\right)\,,\qquad
\Gamma=\left(
\begin{array}{ccccc}
0 & 0 & 0 & 0 & 0 \\
\frac{1}{2}  & 0 & 0 & 0 & 0 \\
0 & \frac{1}{2} & 0 & 0 & 0 \\
0 & 0 & \frac{1}{6} & 0 & 0 \\
0 & 0 & 0 & \frac{1}{2} & 0 \\
\end{array}
\right) \,.
\end{equation}
Observe that the sparse structure of the Shu-Osher forms in \eqref{3eo3} and \eqref{2N3ordern2} is the same as the one in \eqref{2eo3SO}:   some elements in the first column of $\Lambda$ are different from zero and, in the rest of the columns, only the first subdiagonal element is nontrivial; for matrix  $\Gamma$  only the first  subdiagonal contains elements different from zero.

For $s=n^2$ stages, with $n>2$, optimum   third order SSP methods with SSP coefficient $r=n^2-n$ have been found in
\cite[Theorem 3]{ketcheson2008highly}. 
For these schemes, matrices $\Lambda$ and $\Gamma$ have a sparse structure: for matrix $\Lambda$, one element in the $(1+(n-1)(n-2)/2)$-th column  is different from zero and, in the rest of the columns, only the first   subdiagonal is nontrivial;  in matrix $\Gamma$ only the first subdiagonal contains elements different from zero.

As far as we know, a detailed study on the sparse  properties of optimal canonical Shu-Osher form for optimal SSP($s$,$3$) methods with $s\geq 5$ and $s\neq n^2$ has not been done.

\section{Low Storage  $2N$ and $2N^*$ methods}\label{sec:2N}

 Low-storage RK methods are very desirable 
 to solve problems where  
 memory management considerations are at least as important as stability
considerations. 
In the literature, different approaches to reduce the memory computer
usage of forward RK methods have been proposed
\cite{calvo2012,calvo2003minimum,cavaglieri2015,gottlieb1998tvd,gottlieb2001strong,kennedy2000low,ketcheson2008highly,ketcheson2010runge,ruuth2006global,spiteri2002nco,van1977construction,williamson1980low}.

A naive implementation of an explicit $s$-stage RK method requires $s+1$ memory registers. However,   more efficient
implementations are possible if some algebraic relations  on the coefficients are imposed.  
 Most of these efficient implementations are based on the ideas of Williamson
\cite{williamson1980low} and van der Houwen \cite{van1977construction}. Although in
very different way, in both cases  it is possible to implement these RK
methods in two memory registers, and they are usually called  $2N$ schemes, where  $N$ is
the dimension of the differential problem \eqref{ode}.

More recently, in the context of SSP methods, low-storage implementations have been obtained from the sparse structure of the Shu-Osher form \eqref{eq:sspgen} of optimal SSP methods \cite{gottlieb2001strong,ketcheson2008highly,ruuth2006global}. As we have seen in Section \ref{sec:optSSP}, this is the case for SSP($s$,1), SSP($s$,2), SSP(3,3), SSP(4,3) and SSP($n^2,3)$ schemes. In this combined analysis, some optimal SSP
RK methods turn out to be optimal also in terms of the storage required
for their implementation.

In some cases, the sparse structure of the Shu-Osher matrices in  \eqref{eq:sspgen} enables
a  $2N$ low-storage implementation. However, some of these low-storage schemes do not retain $y_n$, the previous time step approximation,  and they require a third memory register to save this value. Recall that, if  $y_n$ is retained during all
the step, it can be used  to  check some accuracy or stability condition (e.g., for a variable stepsize implementation)  without additional memory usage. To differentiate both low-storage schemes the following definition is given in \cite[Section 6.1.3]{GotKetShuBook}.

\begin{definition} Given a  $2N$ low-storage RK method, we say that the RK method is a $2N^*$ low-storage scheme if $y_n$,  the numerical solution of the previous step, is retained.  \hfill $\square$
\end{definition}

In this paper we consider  RK methods with a 
canonical Shu-Osher representation of the form
\begin{align}\label{forma_2N*}
\Lambda=
\left(
\begin{array}{cccccc}
0 & 0 & 0 & 0 & 0& 0 \\
1 & 0 & 0 & 0 & 0& 0 \\
\lambda_{31} & 1- \lambda_{31} & 0 & 0 & 0& 0 \\
\vdots & 0 & \ddots & 0 & 0& 0 \\
\lambda_{s1}& 0 & 0 & 1- \lambda_{s1} & 0& 0 \\
\lambda_{s+1,1}  & 0 & 0 & 0 & 1- \lambda_{s+1,1} & 0 \\
\end{array}
\right)\,,\qquad
\Gamma=\left(
\begin{array}{cccccc}
0 & 0 & 0 & 0 & 0& 0 \\
\gamma_{21} & 0 & 0 & 0 & 0 & 0\\
0 &  \gamma_{32} & 0 & 0 & 0 & 0\\
0 & 0 &  \ddots & 0 & 0& 0 \\
0 & 0 & 0 &  \gamma_{s,s-1} & 0 & 0\\
0 & 0 & 0 & 0 &  \gamma_{s+1,s}& 0\\
\end{array}
\right)
\end{align}
These methods generalize the non-zero structure of schemes 
\eqref{2eo3SO}-\eqref{2N3ordern2}
where the nonzero
coefficients  are on the first column of $\Lambda$ and the 
first subdiagonal of $\Gamma$ and  $\Lambda$.   Schemes \eqref{forma_2N*} allow the
  $2N^*$ implementation  given in Algorithm \ref{alg:1} below.

\begin{algorithm}\caption{\ $2N^*$ implementation of scheme \eqref{forma_2N*}}\label{alg:1}
\begin{algorithmic}[1]
\STATE q1 = y
\STATE q2 = q1
\STATE q1=q1+$\gamma_{21}$*h*f(q1) \label{line3}
\FOR {$i=2$ \TO $s$}
	\STATE q1= $\lambda_{i+1,1}$*q2 +(1-$\lambda_{i+1,1}$)*q1+ 
				$\gamma_{i+1,i}$*h*f(q1) \label{line5}
\ENDFOR
\STATE y=q1
\end{algorithmic}
\end{algorithm}

The second memory register, namely q2, is needed to store the numerical solution $y_n$
required at the end to get $y_{n+1}$. From \eqref{2eo3SO}-\eqref{2N3ordern2}, it can be concluded that the optimal SSP($s$,2), SSP(3,3) and SSP(4,3) schemes are $2N^*$ low-storage methods \cite{GotKetShuBook,ketcheson2008highly}.

Besides, the implementation of schemes \eqref{forma_2N*} allows us to get a closer insight on the construction of the numerical approximation, where repeated forward Euler steps and averaged evaluations are sequentially performed. Observe that  line \ref{line3} in Algorithm \ref{alg:1} is a forward Euler step. Furthermore, in line \ref{line5}, if  $\lambda_{i+1,1} =0$ for some $i$, then  a forward
Euler step is given.

In particular, as   $\lambda_{i,1}=0$ for $i=3, \ldots, s$  for optimal SSP($s$,2) methods \eqref{2eo3SO},  these schemes  consist
of $s-1$ repeated forward Euler $h/(s-1)$-steps, followed by a last averaged evaluation at
$t_n+h$ in order to obtain a second order approximation.
%(see \cite[p. 84]{GotKetShuBook} for details).
For optimal SSP(4,3) method \eqref{2N3ordern2}, as $\lambda_{3,1}=\lambda_{5,1}=0$, it consists  of $2$ repeated
forward Euler $h/2$-steps and an averaged evaluation of the previous stages at
$t_n+h/2$. Additionally, a final forward Euler $h/2$-step is done.

As it has been pointed out above, the canonical Shu-Osher matrix $\Lambda$ for the optimal SSP($n^2$,3) contains a nontrivial element in the $(1+(n-1)(n-2)/2)$-th column and thus they do not belong to the  $2N^*$ low-storage class \eqref{forma_2N*}; as it is proven in \cite{ketcheson2008highly}, they can be implemented in $2N$ memory
registers. 

As far as we know, a detailed study on low-storage properties of optimal $s$-stage
third order SSP methods with $s\geq 5$ and $s\neq n^2$ has not been done.  
For $s=5$,  optimal third order SSP schemes  have been found by numerical search
in \cite{ruuth2006global,spiteri2002nco}; furthermore, numerically optimal schemes 
can be constructed with the code RK--Opt \cite{RKOpt}. In the next section we study
optimal $5$-stage third order SSP methods
and analyse their low-storage properties.

\section{Optimal 5-stage third order SSP methods}\label{sec:ssp53}
 In this section we study the structure and low-storage properties of   optimal SSP(5,3) schemes. First, we deal with the Butcher tableau of these schemes trying to obtain a closed form for some coefficients. Next, we study their Shu-Osher representations and analyse how many memory registers are required for their implementation.

 \subsection{Butcher coefficients of optimal SSP(5,3) methods} 
Different optimal SSP(5,3) methods have been numerically constructed  in the literature \cite{GotKetShuBook,ruuth2006global}.  At this moment there is a package, named RK-Opt,  that can be used to obtain optimal SSP($s$, $p$) schemes \cite{ketcheson2010runge,RKOpt}. Several runs of this code for $s=5$ and $p=3$ show  that there is a family of optimal SSP(5,3) schemes. In this section, we study this family of methods aiming at obtaining some insight in its structure that allows us to prove their low-storage properties.

 From \cite[Theorem 5.2]{Kra2}, we know that the SSP coefficient $R(\mathbb{A})$ for optimal SSP(5,3) schemes is the real root of the polynomial 
\begin{equation}
x^3-5 x^2+10 x-10=0\,  . \label{SSP3eo5}
\end{equation} 
If we denote $r=R(\mathbb{A})$,  the stability function is given by
\begin{align}\label{eq:stab_funct0}
R(z)& = \delta_1 \left(1 + \frac{z}{r}\right)  + \delta_2 \left(1 + \frac{z}{r}\right)^{2} + \delta_3 \left(1 + \frac{z}{r}\right)^{5} \, , 
\end{align}
where 
$$\delta_1=\frac{1}{4} \left(r^2-6
   r+10\right)\, , \quad \delta_2=\frac{1}{3}
   \left(-r^2+5
   r-5\right)\, , \quad \delta_3=\frac{1}{12}
   \left(r^2-2 r+2\right) \, . 
$$
Reorganizing terms, and using  that $r$ is the root of the polynomial \eqref{SSP3eo5}, the stability function \eqref{eq:stab_funct0} is reduced to
\begin{align}R(z)=1+z+\frac{z^2}{2}+\frac{z^3}{6}+\frac{z^4}{12 r}+\frac{z^5}{60 r^2}\, . \label{stab_funct}
\end{align}
The coefficients of $z^4$ and $z^5$ in \eqref{stab_funct} are equal to $b^t {\cal A}^2 c$ and $b^t {\cal A}^3 c$, respectively, and thus, optimal SSP(5,3) schemes must satisfy the conditions
\begin{align}
b^t {\cal A}^2 c=\frac{1}{12r} \, , \qquad b^t {\cal A}^3 c=\frac{1}{60 r^2}\, . \label{z4z5}
\end{align}
Furthermore, optimal SSP(5,3) schemes must also satisfy the well known third order conditions
\begin{align}
&b^t e=1\, , \quad b^t c=\frac{1}{2}\, , \quad b^t c^2=\frac{1}{3}\, , \label{cod_o30}   \\
&b^t {\cal A} c = \frac{1}{6}\, .  \label{cod_o3}\end{align}

In order to go deeper on the properties of SSP(5,3) methods, we have run several times the code  RK-Opt \cite{ketcheson2010runge,RKOpt}. 
For all the schemes obtained, we have observed the following identities for the Butcher coefficients:
\begin{align}a_{21}=a_{31}=a_{32}=\frac{1}{r}\, , \quad a_{41}=a_{42}= a_{43}\, , \quad  a_{52}=a_{53}\, . \label{con1} \end{align} 
Thus, we conclude that the Butcher tableau for optimal SSP(5,3) methods has the following structure
\begin{align}
\begin{array}{c|ccccc}
 0 & 0 & 0 & 0 & 0 & 0 \\[1ex]
 c_2 & \frac{1}{r} & 0 & 0 & 0 & 0 \\[1ex]
 c_3 &  \frac{1}{r} & \frac{1}{r} & 0 & 0 &
   0 \\[1ex]
 c_4 &  a_{41} &
a_{41}  &
a_{41} & 0 & 0 \\[1ex]
c_5 & a_{51}  &
a_{52} &
a_{52}
   & a_{54}  & 0 \\[1ex] \hline
& b_{1} &
b_{2} &
b_{3} &
 b_{4}
  &
b_5
\end{array}\label{opt_o35e}
\end{align}
Furthermore, in the different runs of the code RK-Opt, we have also noticed that the coefficient $b_3$ is always the same   concluding that it only depends on $r$.  With this information and conditions \eqref{z4z5}-\eqref{cod_o3}, we will obtain $b_3$ and some other relationships between the coefficients of these schemes. 

First, we consider conditions \eqref{z4z5} and \eqref{cod_o3}. For a RK method of the form \eqref{opt_o35e} they  are equivalent to
\begin{subequations}
\begin{align}
   &   60 \,  b_{5} \,  a_{54}  \, a_{41} -1=0 \, ,   \label{z4z5_3}
\\
  & 36 \, r \, b_{5} \,  a_{54}\, a_{41}     +12  \,  b_{4} \,  a_{41}  +12 \,   b_{5} \,  a_{52} -    r =0 \, , \label{z4z5_2}
   \\
   &  18 \, r^2 \, b_{5}\,  a_{54} \, a_{41}  \,  +18 \,  r \, b_{4} \, a_{41}  +18  \, r \, b_{5} \, a_{52} +6
 \,   b_{3}-r^2=0 \, .  \label{z4z5_1}
\end{align}
\end{subequations}
From these equations, simple computations allow us to obtain 
\begin{align}
 b_3=\frac{r^2}{60}\, , \qquad b_{4}\, a_{41}   +  b_{5} \, a_{52}  = \frac{r }{30}\, . \label{eq_red0}
\end{align}

As optimal SSP methods usually have sparse Shu-Osher matrices we study the sparsity of   the optimal canonical representation $(\Lambda_r,\Gamma_r)$ for a scheme of the form \eqref{opt_o35e} with $b_3$ given by \eqref{eq_red0}. In this process,  we observe that  
the coefficients $\gamma_{53}$ and $\gamma_{64}$ in  matrix $\Gamma_r$ are close to zero  for all the optimal SSP(5,3) schemes obtained with the code RK-Opt  \cite{RKOpt}.
This means that the equalities 
\begin{align}
b_{4}= a_{54} \, b_{5} \,  r\, , \qquad
 a_{52}=a_{41} \, a_{54} \,  r \label{eq_red}
 \end{align}  
hold and, consequently, we assume   they are true for optimal SSP(5,3) schemes. 
Finally, from  \eqref{eq_red} we obtain that $a_{54} r=  b_{4}/b_{5} =  a_{52}/a_{41}$.  In this way, we get  $ b_{4} a_{41} =b_{5} a_{52} $, and together with  \eqref{eq_red0} we find that
\begin{align}b_{4}\, a_{41}  =b_{5}\, a_{52}   = \frac{r}{60}\, .  \label{eq_red2} 
 \end{align}

Summarizing, optimal SSP(5,3) schemes belong to a 5-parametric family of methods \eqref{opt_o35e},   where $b_{1}, b_{2}, b_{4}, b_{5}$ and $a_{51}$ are the free parameters, with   
\begin{align} a_{41} =\frac{r}{60\, b_{4}}\, , \quad a_{52} =\frac{r}{60 \, b_{5}} \, ,  \quad a_{54} = \frac{b_{4}  }{b_{5}\,   r}\, , \quad b_3=\frac{r^2}{60}\, . \label{aes}
\end{align}
For this 5-parametric family, we   have not imposed the three order conditions  \eqref{cod_o30} yet, that in this case are given by 
\begin{subequations}\label{cod_o300}
\begin{align}
 &  b_1 + b_2 + b_4 +  b_5 +\frac{r^2}{60}-1=0\, ,  \label{co1} \\
%&   \frac{1}{60} r (60 a_{51} b_5+7 r-30)+ b_2 +b_4 =0 \\
& a_{51}  b_{5} r+ b_{2}+ b_{4}+\frac{7 r^2}{60}-\frac{r}{2}=0\, , \label{co2} \\
 & b_5 \left( a_{51}+\frac{b_4}{b_5 r}+\frac{r}{30
  b_5}\right)^2+\frac {b_2}{r^2}+\frac{r^2}{400
   b_4}-\frac{4}{15}=0\, . \label{co3} 
\end{align}
\end{subequations}
After this analysis, the construction of optimal SSP(5,3) schemes is easier. Observe that there are at least two free parameters that can be used to improve some other relevant properties (e.g., error constants) of the method. In this paper, we will restrict the study to low-storage implementations.   
  
In the following examples we show how the free parameters can be used to obtain a specific pattern in the Butcher tableau \eqref{opt_o35e}  of optimal SSP(5,3) schemes.   
 
\begin{example}\label{Ex_SSP53_R}
In \cite{ruuth2006global} an optimal SSP(5,3) method is obtained by numerical optimization; its Butcher tableau is of the form \eqref{opt_o35e} with $a_{51}=a_{52}$ and $b_1=b_2$. The three remaining coefficients, namely $b_{2}$, $b_{4}$ and $b_{5}$, can be obtained from the order conditions  \eqref{cod_o300}. Then, by using \eqref{aes}, the scheme in \cite{ruuth2006global} is recovered. The coefficients of this method are shown in  the Appendix section (see \eqref{method_Ruuth}). \mbox{ }\hfill $\square$
\end{example}

\begin{example} \label{Ex_SSP53_H}
If we impose $b_1=a_{51}$, $b_2=a_{52}$, and $b_4=a_{54}$, then  the last row of the Butcher matrix ${\cal A}$ coincides with vector $(b_1, b_2, b_3, b_4,0)$. From the expression of $a_{54}$ in \eqref{aes}, we obtain that $b_5=1/r$ and thus $b_3=a_{52}$.  Using that $r$ is the root of polynomial \eqref{SSP3eo5}, after some computations, we obtain an optimal SSP(5,3) method of the form \eqref{opt_o35e} with
\begin{align*} %\label{SSP53H_coef}
a_{41}=\frac{r^3+20}{3 r^4} \, , \  \  b_1=a_{51}=\frac{5 \left(r^2+32 r-38\right)}{12
   r \left(r^3+20\right)}\, , \  \  b_2=b_3=a_{52}= \frac{r^2}{60} \, , \quad   b_4=a_{54}= \frac{r^5}{20 \left(r^3+20\right)} \, , \  \   b_5= \frac{1}{r} \, . 
\end{align*}
The coefficients of this scheme are given in the Appendix section (see \eqref{RKq53new2H}). \hfill $\square$
\end{example}

\begin{remark} \label{rm:5fam}
In this section, we have shown that   optimal SSP(5,3) methods belong to the  5-parametric family of methods \eqref{opt_o35e} satisfying \eqref{aes}. From now on, we will refer to this family as the 5-parametric family of methods. Observe that the order conditions \eqref{cod_o30} (namely, \eqref{cod_o300}) have not been imposed to this family.  \hfill $\square$
\end{remark} 

\subsection{Shu-Osher low-storage form of optimal SSP(5,3) methods}\label{subsec:ssp53}
In this section we study optimal canonical  Shu-Osher representations for the 5-parametric family of methods above (see Remark~\ref{rm:5fam}). Remember that optimal SSP(5,3) schemes belong to this family. Our goal is to determine the minimum number of memory registers required for implementing them.

As it has been pointed out in Example \ref{ex:opt_rep}, for $r=R(\mathbb{A})$, an optimal representation is given by $\Lambda_r$ in \eqref{eq:sspgen0} and $\Gamma_r:=\Lambda_r/r$. For the 5-parametric family of methods, the canonical form of this optimal representation is given by  
\begin{align}\label{eq:opt_can}
\tilde \Lambda=\left(
\begin{array}{cccccc}
 0 & 0 & 0 & 0 & 0 & 0 \\
 1 & 0 & 0 & 0 & 0 & 0 \\
 0 & 1 & 0 & 0 & 0 & 0 \\
\tilde \lambda_{41}  & 0 & \frac{r^2}{60 b_4 } & 0 & 0 & 0 \\
\tilde \lambda_{51} & 0 & 0 & \frac{ b_4 }{ b_5 } & 0 & 0
   \\
\tilde \lambda_{61} & \tilde \lambda_{62} & 0 & 0 &  b_5  r & 0
\end{array}
\right)\, ,  \qquad \tilde \Gamma=\left(
\begin{array}{cccccc}
 0 & 0 & 0 & 0 & 0 & 0 \\
 \frac{1}{r} & 0 & 0 & 0 & 0 & 0 \\
 0 & \frac{1}{r} & 0 & 0 & 0 & 0 \\
 0 & 0 & \frac{r}{60 b_4 } & 0 & 0 & 0 \\
 \tilde \gamma_{51} & 0 & 0 & \frac{ b_4 }{ b_5  r} & 0
   & 0 \\
\tilde \gamma_{61} &
  \tilde \gamma_{62} & 0 & 0 &  b_5  & 0
\end{array}
\right)\, , 
\end{align}
where 
\begin{subequations}
\begin{align}
\tilde \lambda_{41} & = 1-\frac{r^2}{60 b_4 }\, , \quad \tilde \lambda_{51}   =   1-\frac{ b_4 }{ b_5 } \, , \quad \tilde \lambda_{61}   = 1 - b_2  r- b_5  r +\frac{r^3}{60} \, , \quad \tilde \lambda_{62}= r \left( b_2 -\frac{r^2}{60}\right)\, , \label{eq:opt_can1}\\
  \tilde \gamma_{51}& = a_{51}-\frac{r}{60  b_5 }\, ,  
  \quad \tilde \gamma_{61}= b_1 - b_2 - r b_5 \left( a_{51}  -\frac{r}{60 b_5} \right) \, , \quad \tilde \gamma_{62}= b_2 -\frac{r^2}{60}\, . \label{eq:opt_can2}
\end{align}
\end{subequations}
Observe that this representation is not like the $2N^{*}$ low-storage form \eqref{forma_2N*}.  Remember that, for the optimal representation in Example 1,  inequalities \eqref{eq:con_rep} are satisfied. Particularly, $\tilde \lambda_{i1}\geq 0$, for $i=4,5,6$;  $\tilde \lambda_{62}\geq 0$;  $\tilde \gamma_{i1}\geq 0$, for $i=5,6$; $\tilde \gamma_{62}\geq 0$, and $\tilde \lambda_{51}- r \tilde \gamma_{51}\geq 0$. 

In order to obtain a sparse optimal Shu-Osher representation we follow the constructive proof of Proposition 3.12 in \cite{HiRep}. Thus we consider a  lower triangular matrix $\Lambda=(\lambda_{ij})$ with arbitrary coefficients, and we define $\Gamma=(I-\Lambda)  \mathbb{A}$ and \mbox{$\alpha=(I-\Lambda)e$}.  First, we impose inequality  \eqref{des3} component-wise. It turns out that for some components  the upper and lower bound is the same and thus the middle term is determined. Next, we impose conditions on $\Gamma$  to obtain a sparse matrix such that only the first  subdiagonal is nontrivial (see \eqref{forma_2N*}).  Finally, we move elements of vector $\alpha$ to the first column of  $\Lambda$ to obtain the canonical form with \mbox{$\alpha=(1, 0 , \ldots, 0)^t$}. Proceeding in this way, and using the order condition $b^t e=1$, we obtain the canonical representation
\begin{align} \label{SO_opt}
\Lambda= \left(
\begin{array}{cccccc}
 0 & 0 & 0 & 0 & 0 & 0 \\[0.25ex]
 1 & 0 & 0 & 0 & 0 & 0 \\[0.25ex]
 0 & 1 & 0 & 0 & 0 & 0 \\[0.25ex]
\lambda_{41} & 0 & \frac{r^2}{60  b_4} & 0 & 0 & 0 \\[0.25ex]
 \lambda_{51} & \lambda_{52} & 0 &
   \frac{ b_4}{ b_5} & 0 & 0 \\[0.25ex]
\lambda_{61} & \lambda_{62} &  \lambda_{63} & 0 &  b_5 r & 0
\end{array}
\right)\, ,  \qquad 
\Gamma=\left(
\begin{array}{cccccc}
 0 & 0 & 0 & 0 & 0 & 0 \\[0.25ex]
 \frac{1}{r} & 0 & 0 & 0 & 0 & 0 \\[0.25ex]
 0 & \frac{1}{r} & 0 & 0 & 0 & 0 \\[0.25ex]
 0 & 0 & \frac{r}{60  b_4} & 0 & 0 & 0 \\[0.25ex]
 0 & 0 & 0 & \frac{ b_4}{ b_5 r} & 0 & 0 \\[0.25ex]
 0 & 0 & 0 & 0 &  b_5 & 0
\end{array}
\right)\, , 
\end{align}
where   
\begin{subequations}\label{eq:lambdas}
\begin{align} 
\lambda_{41} &=  1-\frac{r^2}{60  b_4} \, , \label{eq:l1}
\\ \lambda_{51}& = 1-\frac{ {b_4}}{ {b_5}} - r \left({a_{51}}  -\frac{r}{60  {b_5}}\right) \, , \quad \lambda_{52}=r \left({a_{51}} -\frac{r}{60  {b_5}}\right)  \, ,   \label{eq:l2} \\ 
\lambda_{61}& =  1  - {b_1} r -{b_5} r +  r  {b_5} a_{51}\, , \quad \lambda_{62}  = r \left ({b_1}  - {b_2}  -r {b_5}\left(  a_{51}   -\frac{r}{60 {b_5}}\right)\right)  \, , \quad \lambda_{63}=r \left(b_2
    -\frac{r^2}{60 }\right) .  \label{eq:l3}
\end{align}
\end{subequations}
From the order conditions \eqref{co1}-\eqref{co2}, and using that $r$ is the root of the polynomial \eqref{SSP3eo5}, we obtain that $$\lambda_{61}=\frac{1}{10} \left(-r^3+5 r^2-10 r+10\right)=0\, .$$
Observe that $\Lambda\geq 0$ and $\Gamma\geq 0$ in \eqref{SO_opt} imply that $\Lambda- r \Gamma\geq 0$.  Thus,  in order to obtain a representation with optimal SSP coefficient $r$ we only require $\lambda_{41}, \lambda_{51}, \lambda_{52}, \lambda_{62}, \lambda_{63}$, $b_4$ and $b_5$ to be non negative.
Observe that $$\lambda_{41}=\tilde  \lambda_{41}\geq 0\, , \quad \lambda_{51}= \tilde \lambda_{51}- r \, \tilde \gamma_{51} \geq 0\, , \quad \lambda_{52}=r\, \tilde \gamma_{51}\geq 0\, , \quad \lambda_{62}=r \, \tilde \gamma_{61}\geq 0\, , \quad \lambda_{63}=\tilde \lambda_{62}\geq 0\, . $$
Consequently, \eqref{SO_opt}-\eqref{eq:lambdas} is an optimal canonical representation of any first order method of the 5-parametric family of schemes. In particular, it is a  canonical  representation for optimal SSP(5,3) methods.

\begin{remark} 
The optimal canonical  Shu-Osher form  for the 5-parametric family has the sparse structure \eqref{eq:opt_can}. As it has been pointed out in Remark \ref{inv}, the  Shu-Osher  representation of a RK method is invariant under the transformation \eqref{invv}. Precisely,  this transformation can be used to obtain a 
subdiagonal matrix $\Gamma$. 
%For example, for the SSP(5,3) family \eqref{opt_o35e} with \eqref{ads}, matrices $(\Lambda, \Gamma)$ obtained from $\Lambda= r (I+\mathbb{A})^{-1} \mathbb{A}$, $\Gamma=(I-\Lambda) \mathbb{A}$, are of the form
%where $\lambda{ij}=r \gamma_{ij}$. 
For example, in order to transform the element $\tilde\gamma_{62}$ in \eqref{eq:opt_can} into a zero, we have to make   
\eqref{inv1} equal to zero, this is  $\tilde \gamma_{62} +t \tilde \gamma_{32}=0$, to get $t=-r\tilde \gamma_{62}$. With this value of $t$ we have to transform the elements $\tilde\lambda_{62}$ and $\tilde\lambda_{63}$ in $\tilde\Lambda$ according to 
 \eqref{inv2} and \eqref{inv3}, respectively. If we repeat this process for elements $\tilde\gamma_{51}$ and $\tilde\gamma_{61}$ in $\tilde\Gamma$, we finally obtain the Shu-Osher representation in \eqref{SO_opt} for the 5-parametric family of schemes.\hfill $\square$
\end{remark} 

Shu-Osher representations of the form \eqref{forma_2N*} can be implemented with $2N^*$ memory registers. This is not the case for the optimal SSP(5,3) methods as we see in the following lemma.

\begin{proposition} The  optimal SSP(5,3) methods  are not  $2N^*$ low-storage schemes of the form \eqref{forma_2N*}. 
\end{proposition}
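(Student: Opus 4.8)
The plan is to argue by contradiction: suppose some optimal SSP(5,3) method admits a canonical Shu--Osher representation $(\Lambda,\Gamma)$ of the form \eqref{forma_2N*}, i.e. with nonzero entries only in the first column of $\Lambda$, the first subdiagonal of $\Lambda$, and the first subdiagonal of $\Gamma$. Since such a method belongs to the 5-parametric family, I would first use the explicit optimal canonical representation \eqref{SO_opt}--\eqref{eq:lambdas} constructed in the previous section as the reference point, and exploit the invariance transformation \eqref{invv} from Remark~\ref{inv} to pass between any two canonical Shu--Osher representations of the same method. The key structural fact to extract is that in \eqref{SO_opt} the third column of $\Gamma$ contains the entry $\gamma_{43}=r/(60 b_4)$ on the first subdiagonal \emph{and} that column~3 of $\Lambda$ carries $\lambda_{43}=r^2/(60 b_4)$; meanwhile $\lambda_{52}$ and $\lambda_{63}$ are generically nonzero. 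The point is that a representation of the form \eqref{forma_2N*} forbids entries like $\lambda_{52}$ (row~5, column~2, which is neither the first column nor the first subdiagonal) and $\lambda_{63}$ (row~6, column~3).

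First I would show that one cannot clear these offending entries by the allowed invariance moves \eqref{inv1}--\eqref{inv3} without destroying the required sparsity of $\Gamma$. Concretely, applying \eqref{invv} with parameter $t$ on rows $i,j$ changes $\gamma_{ik}\mapsto\gamma_{ik}+t\gamma_{jk}$ and $\lambda_{ik}\mapsto\lambda_{ik}+t\lambda_{jk}$ ($k\neq j$), $\lambda_{ij}\mapsto\lambda_{ij}-t$. To kill $\lambda_{52}$ (with $j=2$) one must take $t$ to modify row~5 via row~2; but row~2 of $\Gamma$ has $\gamma_{21}=1/r\neq 0$, so this reintroduces a nonzero $\gamma_{51}$ in column~1 of $\Gamma$, which \eqref{forma_2N*} does not allow. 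Similarly any attempt to remove $\lambda_{63}$ using row~3 reintroduces $\gamma_{6,2}$ (since $\gamma_{32}=1/r\neq0$), again violating \eqref{forma_2N*}. So the obstruction must be shown to be intrinsic: I would argue that the $(\Lambda,\Gamma)$-equivalence class contains no member with the $2N^*$ pattern, by tracking which entries of $\Gamma$ are forced nonzero.

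A cleaner route — and the one I would actually push through — is to use quantities invariant under \eqref{invv}. Since $\mathbb{A}=(I-\Lambda)^{-1}\Gamma$ is fixed, the Butcher matrix is invariant; the strategy is to translate the sparsity pattern \eqref{forma_2N*} into algebraic constraints on $\mathbb{A}$ and show they are incompatible with the 5-parametric structure \eqref{opt_o35e}, \eqref{aes} together with the order conditions \eqref{cod_o300}. For a representation of the form \eqref{forma_2N*}, one reads off $\mathbb{A}=(I-\Lambda)^{-1}\Gamma$ directly: $a_{21}=\gamma_{21}$, $Y_3$ depends only on $Y_2$ and $Y_1$, $Y_4$ only on $Y_3,Y_1$, etc., so the rows of $\mathbb{A}$ inherit a rigid recursive structure — essentially each internal stage $Y_{i+1}$ is an affine combination of $y_n$ and the single forward-Euler update of $Y_i$. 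Working this out yields relations among the $a_{ij}$ that, combined with \eqref{z4z5_3}--\eqref{z4z5_1} and the order conditions, over-determine the five free parameters; I expect to reach a contradiction such as forcing $r$ to satisfy a polynomial incompatible with \eqref{SSP3eo5}, or forcing a coefficient like $b_4$ or $b_5$ to vanish (contradicting $b_j>0$), or forcing one of $\lambda_{52},\lambda_{62},\lambda_{63}$ to be simultaneously zero, which the order conditions \eqref{co2}--\eqref{co3} prevent for the admissible parameter range.

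The main obstacle is the non-uniqueness of the Shu--Osher representation: it is not enough to observe that the particular representation \eqref{SO_opt} fails to have the form \eqref{forma_2N*}; one must rule out \emph{every} canonical representation. I would handle this by parametrizing all canonical representations via the family of admissible $\Lambda$ satisfying \eqref{des2}--\eqref{des4} (equivalently, by orbits of \eqref{invv}), showing that the entry in position $(6,3)$ of $\Lambda$ — or the pair of entries that a $2N^*$ form would require to be zero — cannot all be annihilated because doing so propagates a nonzero entry into a forbidden position of $\Gamma$, the propagation being governed by the fixed nonzero subdiagonal entries $\gamma_{21}=\gamma_{32}=1/r$. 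Making this propagation argument precise — tracking a "conserved" nonzero pattern through the invariance group — is the crux; the rest is bookkeeping with \eqref{aes} and \eqref{cod_o300}.
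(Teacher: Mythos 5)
Your overall strategy is the one the paper actually follows: you correctly identify that the non-uniqueness of the Shu--Osher representation is the issue to be handled, and your ``cleaner route'' --- translating the sparsity pattern \eqref{forma_2N*} into representation-independent constraints on the Butcher tableau and playing them off against the structure \eqref{opt_o35e}, \eqref{aes} and the order conditions \eqref{cod_o300} --- is exactly the right reduction. The gap is that you never carry this final step out. The argument ends with ``I expect to reach a contradiction such as\dots'' followed by three different guesses about what form the contradiction might take. That expectation is the entire content of the proposition: an over-determined nonlinear system can perfectly well be consistent, so until the incompatibility is exhibited nothing has been proved. Your preliminary observation about the invariance moves \eqref{invv} (that killing $\lambda_{52}$ or $\lambda_{63}$ via rows $2$ or $3$ reintroduces forbidden entries of $\Gamma$ because $\gamma_{21}=\gamma_{32}=1/r\neq 0$) is correct and suggestive, but, as you say yourself, it does not by itself exclude every canonical representation, so it cannot replace the missing computation.

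For the record, the computation closes quickly. Imposing the pattern \eqref{forma_2N*} forces $\lambda_{52}=\lambda_{62}=\lambda_{63}=0$ in \eqref{SO_opt}, which by \eqref{eq:l2}--\eqref{eq:l3} gives $a_{51}=r/(60 b_5)$ and $b_1=b_2=r^2/60$. Substituting into \eqref{co1}--\eqref{co2} and using that $r$ is the root of \eqref{SSP3eo5} yields $b_4=\tfrac{r}{20}(10-3r)$ and $b_5=\tfrac{1}{10}(r^2-5r+10)=1/r$, and the remaining order condition \eqref{co3} then reduces to $3r^4-40r^3+175r^2-330r+250=0$, which has no root in common with \eqref{SSP3eo5}. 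So the contradiction is of the first type you anticipated, but anticipating it is not the same as deriving it.
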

 
 \begin{proof}  Consider a canonical Shu-Osher representation of the form \eqref{SO_opt} and assume that   $\lambda_{52}=\lambda_{62}= \lambda_{63}=0$. In this case,  
 $$a_{51}= \frac{r}{60  b_5}\, , \quad b_1=b_2= \frac{r^2}{60}\, . 
 $$
 Inserting these values into the order conditions \eqref{co1}-\eqref{co2}, we get that
 $$b_4=\frac{r}{20} \left(10 -3 r \right)\, , \quad b_5=\frac{1}{10} \left(r^2-5 r+10\right)= \frac{1}{r}\, , 
 $$
 where we have used that $r$ is the root of the polynomial \eqref{SSP3eo5}. Now, the order condition $b^t c^2=1/2$, given in this case by \eqref{co3}, is reduced to
\begin{equation}\label{eq:pol4}
    3 r^4-40 r^3+175 r^2-330 r+250= 0\, .
\end{equation}
But  \eqref{eq:pol4} is different from zero for $r=R(\mathbb{A})$ and, 
consequently, the order condition cannot be fulfilled. 
%\hfill $\square$
\end{proof}
 
 Nevertheless, some optimal SSP(5,3) methods can be implemented in  $3N$  memory registers, as we see in the following proposition.
 \begin{proposition}\label{prop_3N}
 Consider a Shu-Osher representation of the form \eqref{SO_opt}. If 
the coefficients $\lambda_{52}=\lambda_{62}=0$, or the coefficient
 $\lambda_{63}=0$, 
 then the scheme can be implemented in  $3N$  memory registers.   
 \end{proposition}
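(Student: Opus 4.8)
The plan is to translate the Shu--Osher representation \eqref{SO_opt} into the stage recursion \eqref{metodoalfas} and then, under each of the two hypotheses, to exhibit an implementation --- built exactly like Algorithm \ref{alg:1} but with one extra working register --- in which at most three vectors of length $N$ are ever stored simultaneously. From \eqref{SO_opt}, the first three stages are the forward Euler steps $Y_1=y_n$, $Y_2=Y_1+\tfrac{h}{r}f(Y_1)$, $Y_3=Y_2+\tfrac{h}{r}f(Y_2)$, while the remaining ones read
\begin{align*}
Y_4 &= \lambda_{41}Y_1+\tfrac{r^2}{60 b_4}Y_3+h\,\tfrac{r}{60 b_4}f(Y_3)\,, \\
Y_5 &= \lambda_{51}Y_1+\lambda_{52}Y_2+\tfrac{b_4}{b_5}Y_4+h\,\tfrac{b_4}{b_5 r}f(Y_4)\,, \\
y_{n+1}=Y_6 &= \lambda_{62}Y_2+\lambda_{63}Y_3+b_5 r\,Y_5+h\,b_5\,f(Y_5)\,,
\end{align*}
where we have used $\lambda_{61}=0$; in particular $y_n$ does not enter the last stage. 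Since the stages are computed in order and each stage vector is used over a contiguous range of later stages, the number of registers required equals the largest number of stage vectors that must be held at the same time, so it suffices to show that each hypothesis brings this number down to three and to fix the order of the overwrites.

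First I would treat the case $\lambda_{52}=\lambda_{62}=0$. Then $Y_2$ is used only to produce $Y_3$, $Y_1$ is read for the last time when $Y_5$ is formed, and $Y_3$ is read for the last time when $Y_6$ is formed, so no more than three of the $Y_i$ are alive at any instant. Concretely: hold $y_n$ in a register q3; build $Y_2$ and then $Y_3$ in q1 by two in-place forward Euler updates, as in Algorithm \ref{alg:1}; form $Y_4$ into the still unused register q2, reading q3 $(=Y_1)$ and q1 $(=Y_3)$; overwrite q3 (whose content $Y_1$ is no longer needed, precisely because $\lambda_{61}=0$) by $Y_5$, reading q1 and q2; and finally overwrite q1 (whose content $Y_3$ has just been read for the last time) by $Y_6=y_{n+1}$. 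At each assignment the register being overwritten holds a value that is not used afterwards, so the implementation is correct and runs in $3N$.

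The case $\lambda_{63}=0$ is slightly tighter and I would argue in the same style. Now $Y_3$ is used only to produce $Y_4$, while $Y_2$ is needed up to $Y_6$ and $Y_1$ up to $Y_5$; the worst moment is just after $Y_3$ has been computed, when $Y_1,Y_2,Y_3$ are all alive, but $Y_3$ is consumed immediately by $Y_4$. Concretely: hold $y_n$ in q3; build $Y_2$ in q1 (in-place forward Euler) and then $Y_3$ in q2 from q1; overwrite q2 in place by $Y_4$ (reading q3 $=Y_1$ and q2 $=Y_3$); overwrite q3 (content $Y_1$, now dead since $\lambda_{61}=0$) by $Y_5=\lambda_{51}Y_1+\lambda_{52}Y_2+\tfrac{b_4}{b_5}Y_4+h\,\tfrac{b_4}{b_5 r}f(Y_4)$, reading q3, q1 and q2; and finally overwrite q1 (content $Y_2$, now read for the last time) by $Y_6=\lambda_{62}Y_2+b_5 r\,Y_5+h\,b_5\,f(Y_5)=y_{n+1}$. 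Again three registers suffice.

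The only genuinely delicate point is the register bookkeeping --- choosing, at each stage, a register whose current content will not be read again --- and the observation that makes three registers (rather than four) possible is exactly $\lambda_{61}=0$, which frees the slot holding $y_n$ precisely when $Y_5$ is formed. I would also note that the only function evaluations performed in place (on the register that is simultaneously being updated) are the forward Euler steps for $Y_2$, $Y_3$ and, in the second case, $Y_4$; these are of the same type as line \ref{line3} of Algorithm \ref{alg:1}, so no hidden fourth register is introduced and the count $3N$ is honest.
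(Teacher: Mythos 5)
Your proof is correct and follows essentially the same route as the paper, which likewise establishes the proposition by exhibiting an explicit three-register algorithm for each of the two cases (the paper keeps $y_n$ in a fixed register throughout and stores $Y_3$, respectively $Y_2$, in the third register, whereas you free the $y_n$ slot when forming $Y_5$ by invoking $\lambda_{61}=0$; both allocations are valid). One trivial slip in your first case: $Y_5$ is formed by reading q3 ($=Y_1$, overwritten in place) and q2 ($=Y_4$), not ``q1 and q2'' as written.
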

 
\begin{proof}  
Below we show the corresponding $3N$ implementations for the case $\lambda_{52}=\lambda_{62}=0$ (left) and 
$\lambda_{63}=0$ (right).

\begin{minipage}[t]{0.45\textwidth}
 \begin{algorithm}[H]\caption{ \ case $\lambda_{52}=\lambda_{62}=0$}\label{alg:2}
\begin{algorithmic}[1]
\STATE q1 = y
\STATE q2 = q1
\FOR {$i=1$ \TO $2$}
	\STATE q1= q1+h*f(q1)/r
\ENDFOR
\STATE q3=q1
\FOR {$i=3$ \TO $4$}
	\STATE q1= $\lambda_{i+1,1}$*q2+$\lambda_{i+1,i}$*q1+$\gamma_{i+1,i}$*h*f(q1)
\ENDFOR
\STATE q1=$\lambda_{61}$*q2+$\lambda_{63}$*q3+$\lambda_{65}$*q1+$\gamma_{65}$*h*f(q1)
\STATE y=q1
\end{algorithmic}
\end{algorithm}
 \end{minipage}
 \qquad
  \begin{minipage}[t]{0.45\textwidth}
 \begin{algorithm}[H]\caption{ \ case  $\lambda_{63}=0$}\label{alg:3}
\begin{algorithmic}[1]
\STATE q1 = y
\STATE q2 = q1
\STATE q1=q1+h*f(q1)/r
\STATE q3=q1
\STATE q1=q1+h*f(q1)/r
\STATE q1= $\lambda_{41}$*q2+$\lambda_{43}$*q1+$\gamma_{43}$*h*f(q1) 
\FOR {$i=4$ \TO $5$}
	\STATE q1=  $\lambda_{i+1,1}$*q2+$\lambda_{i+1,2}$*q3+$\gamma_{i+1,i}$*h*f(q1)
\ENDFOR
\STATE y=q1
\end{algorithmic}
\end{algorithm}
 \end{minipage}

\bigskip

When  $\lambda_{52}=\lambda_{62}=0$ (left), a third register q3 is needed to store the third stage required to compute $y_{n+1}$. However, when $\lambda_{63}=0$ (right), the third register is needed to store the second stage we need at the end of the step. 
%\hfill $\square$
\end{proof}

Some optimal SSP(5,3) methods considered in  this paper can be implemented in  $3N$  memory registers:
\begin{itemize}
\item  Scheme \eqref{method_Ruuth}, constructed in \cite{ruuth2006global},  has $\lambda_{52}=\lambda_{62}=0$ and $\lambda_{63}\neq 0$;  
\item Method \eqref{RKq53new2H}, constructed in this paper,  has $\lambda_{63}=0$;
\item Method \eqref{David1}, constructed with the code RK-Opt, has $\lambda_{52}\neq 0$, $\lambda_{62}\neq 0$ and $\lambda_{63}= 0$.
\end{itemize}
However, the optimal  SSP(5,3) scheme \eqref{David2}, constructed with the code RK-Opt, has $\lambda_{52}\neq 0$, $\lambda_{62}= 0$ and $\lambda_{63}\neq 0$. Consequently, more than $3N$ registers are required to be implemented. 

 \section{Five-stage third-order  $2N^*$ Explicit Runge-Kutta methods}\label{sec:SSP532N}
In this section we consider 5-stage schemes with sparse Shu-Osher form \eqref{forma_2N*}
\begin{align}\Lambda=
\left(
\begin{array}{cccccc}
 0 & 0 & 0 & 0 & 0& 0 \\
 1 & 0 & 0 & 0 & 0& 0 \\
 \lambda_{31} & 1-\lambda_{31} & 0 & 0 & 0& 0 \\
\lambda_{41} & 0 & 1-\lambda_{41} & 0 & 0& 0 \\
\lambda_{51} & 0 & 0 & 1-\lambda_{51} & 0& 0 \\
\lambda_{61} & 0 & 0 & 0 & 1-\lambda_{61}& 0 \\
\end{array}
\right)\,,\qquad
\Gamma=\left(
\begin{array}{cccccc}
 0 & 0 & 0 & 0 & 0& 0 \\
\gamma_{21} & 0 & 0 & 0 & 0 & 0\\
 0 & \gamma_{32} & 0 & 0 & 0 & 0\\
 0 & 0 & \gamma_{43} & 0 & 0& 0 \\
 0 & 0 & 0 & \gamma_{54} & 0 & 0\\
 0 & 0 & 0 & 0 & \gamma_{65} & 0\\
\end{array}
\right)\, . \label{SO}
\end{align}
Our goal is to construct a 5-stage third order $2N^*$ low-storage scheme with the largest possible SSP coefficient. 
We will use an additional fifth  stage to improve, not only the SSP coefficient, but also other relevant properties.

The Butcher tableau for the 9-parameter RK methods \eqref{SO} is given by
\begin{align}\label{RKq53new}
\hbox{ 
\tabcolsep 4pt
\begin{tabular}{c|ccccc}
     &  $0$ & $0$&  $0$&  $0$&  $0$     \\ [0.25ex]
    & $\gamma_{21}$ & $0$ &  $0$ &  $0$&  $0$ \\ [0.5ex]
     & $(1-\lambda_{31}) $ & $\gamma_{32}$  & $0$& $0$&  $0$   \\[0.5ex]
     & $(1-\lambda_{41})(1-\lambda_{31})\gamma_{21}$ & $(1-\lambda_{41})\gamma_{32}$& $\gamma_{43}$  & $0$ & $0$   \\[0.5ex]
   & $(1-\lambda_{51})(1-\lambda_{41})(1-\lambda_{31})\gamma_{21}$ & $(1-\lambda_{51})(1-\lambda_{41})\gamma_{32}$& $(1-\lambda_{51})\gamma_{43}$  
 & $\gamma_{54}$ & $0$   \\[0.5ex]
\hline
    & ${b_1}$ & ${b_2}$& ${b_3}$  
 & ${b_4}$ & ${b_5}$   \\[0.5ex]
  \end{tabular}      
 }  
\end{align}
where
\begin{align*}
        {b_1}&= (1-\lambda_{61})(1-\lambda_{51})(1-\lambda_{41})(1-\lambda_{31})\gamma_{21}\,, 
        			&  {b_4}& = \gamma_{54}(1 -\lambda_{61})\, ,\\[0.5ex]
        {b_2}&=(1-\lambda_{61})(1-\lambda_{51})(1-\lambda_{41})\gamma_{32}\,, 
        			&  {b_5} &= \gamma_{65}\, . \\[0.5ex]
         {b_3}&= (1-\lambda_{61})(1-\lambda_{51})\gamma_{43}\,,  & \\
 %         {b_4}&= \gamma_{54}(1 -\lambda_{61})\,, & \\
 %         {b_5}&= \gamma_{65}\, . & 
 \end{align*}         
If we define the new parameters
\begin{align*}
   {u}&=\frac{1}{(1-\lambda_{61})(1-\lambda_{51})(1-\lambda_{41})(1-\lambda_{31})}\,, &   {v}&= \frac{1}{(1-\lambda_{61})(1-\lambda_{51})(1-\lambda_{41})}\,,
   \\[2ex]  {w}&= \frac{1}{(1-\lambda_{61})(1-\lambda_{51})}\, , &    {x}& = \frac{1}{(1-\lambda_{61})}\, , 
\end{align*}
then the Butcher tableau for method \eqref{RKq53new} can be written as
\begin{equation} \label{RKq53new2}
\hbox{ 
\tabcolsep 4pt
\begin{tabular}{c|ccccc}
   $0$  &  $0$ & $0$&  $0$&  $0$&  $0$     \\ [0.25ex]
   $ ub_1 $  & $ub_1$ & $0$& $0$   & $0$ & $0$   \\[0.5ex]
    $ v(b_1+b_2) $  & $vb_1$ & $vb_2$& $0$   & $0$ & $0$   \\[0.5ex]
   $w(b_1+b_2+b_3)$  & $wb_1$ & $wb_2$& $wb_3$   & $0$ & $0$    \\[0.5ex]
 $x(b_1+b_2+b_3+b_4)$  & $xb_1$ & $xb_2$& $xb_3$   & $xb_4$ & $0$   \\[0.5ex]
\hline
    & $b_1$ & $b_2$& $b_3$   & $b_4$ & $b_5$   \\[0.5ex]
  \end{tabular}      
 }  
\end{equation}
and the Shu-Osher representation \eqref{SO} is given by
\begin{align} \label{SO_rep}
\Lambda=
\left(
\begin{array}{cccccc}
 0 & 0 & 0 & 0 & 0& 0 \\[0.5ex]
 1 & 0 & 0 & 0 & 0& 0 \\[0.5ex]
 \frac{u-v}{u} &  \frac{v}{u} & 0 & 0 & 0& 0 \\[0.5ex]
 \frac{v-w}{v}  & 0 & \frac{w}{v}& 0 & 0& 0 \\[0.5ex]
 \frac{w-x}{w}  & 0 & 0 & \frac{x}{w}& 0& 0 \\[0.5ex]
 \frac{x-1}{x} & 0 & 0 & 0 & \frac{1}{x}& 0  
\end{array}
\right)\,,\qquad
\Gamma=\left(
\begin{array}{cccccc}
 0 & 0 & 0 & 0 & 0& 0 \\[0.5ex]
 b_1u & 0 & 0 & 0 & 0 & 0\\[0.5ex]
 0 & b_2v & 0 & 0 & 0 & 0\\[0.5ex]
 0 & 0 & b_3w & 0 & 0& 0 \\[0.5ex]
 0 & 0 & 0 & b_4x & 0 & 0\\[0.5ex]
 0 & 0 & 0 & 0 & b_5 & 0 
\end{array}
\right)\, . 
\end{align}
Observe that $\Lambda\geq 0$ if and only if 
\begin{align}\label{eq:uvwx1}
u\geq v \geq w \geq x\geq 1\, .
\end{align}
Consequently,  the maximum value in each column of the Butcher tableau \eqref{RKq53new2} is the subdiagonal one. As
$$R(\mathbb{A})\leq \frac{1}{\max\{a_{ij}, b_j\}}\, ,
$$
From \eqref{eq:uvwx1}
we get that 
$$R(\mathbb{A})\leq \frac{1}{\max\{u b_1, v b_2, w b_3, x b_4, b_5\} }\, . 
$$

\subsection{Construction of 5-stage third order SSP  methods SSP53\_$2N^*_1$ and SSP53\_$2N^*_2$}
  
\noindent{\bf Method  SSP53\_$2N^*_1$:} 
Having in mind that methods \eqref{SO_rep} can be implemented in $2N^*$ memory registers, we look for the optimal third order SSP method 
in this family. We have used standard numerical optimization techniques to get the 9 unknowns in \eqref{SO_rep}, namely $b_i, i=1,\ldots,5; u,v,w,x$. More precisely, we have solved the following  optimization problem.
\begin{equation}
\begin{split}\label{codigo}
	& \text{Maximize  $r$ subject to:}\\[1ex]
	&b_i\ge 0\,, \ i=1,\ldots,5\,, \\
	&u\ge v \ge w \ge x \ge 1\,, \\
	&\text{Third order conditions \eqref{cod_o30}-\eqref{cod_o3}}  \,.\end{split}
\end{equation}
The optimum SSP coefficient $r=2.18075$   is obtained when  $u=v=w=2.33320$ and $x=1$. 
%\textcolor{green}
{This means that the method consists in three FE-steps, and average for the fifth stage and a last FE-step to get the numerical solution.} 
The Butcher coefficients for this method are  
\begin{align} \label{RKq53new2180749}
\hbox{ 
\tabcolsep 5pt
\begin{tabular}{c|ccccc}
  $0$  &  $0$ & $0$&  $0$&  $0$&  $0$     \\ [0.25ex]
  $0.443568$  & $0.443568$ & $0$ &  $0$ &  $0$&  $0$ \\ [0.5ex]
  $0.73468$  & $0.443568$ & $0.291111$  & $0$& $0$&  $0$   \\[0.5ex]
  $1.00529$  & $0.443568$ & $0.291111$& $0.270613$  & $0$ & $0$   \\[0.5ex]
 $0.541442$  & $0.190112$ & $0.124769$& $0.115984$& $0.110578$ & $0$   \\[0.5ex]
\hline
    & $0.190112$ & $0.124769$& $0.115984$   & $0.110578$ & $0.458558$   \\[0.5ex]
  \end{tabular}}  
\end{align}
The expanded coefficients  and the Shu-Osher matrices for this method are given in the Appendix (see \eqref{5eo3SSP_opt1}).
  Observe that the largest value in the Butcher tableau is $b_5=1/r$. 
For this method, the 2-norm of the coefficients in the leading term of the local error   is $0.027841$, and the stability function   is given by
\begin{align}
R(z)  = 1+z+\frac{1}{2}z^2+\frac{1}{6}z^3+0.027360 z^4+0.001772\,z^5\, . 
\end{align}
The stability region for this method (dotted line  in left Figure \ref{Fig_stabR}) is  larger than the one  for  the 4-stage third order optimal SSP methods (dashed line).

\begin{figure}[ht]\label{Fig_stabR}
\begin{center} 
{\includegraphics[scale=0.25]{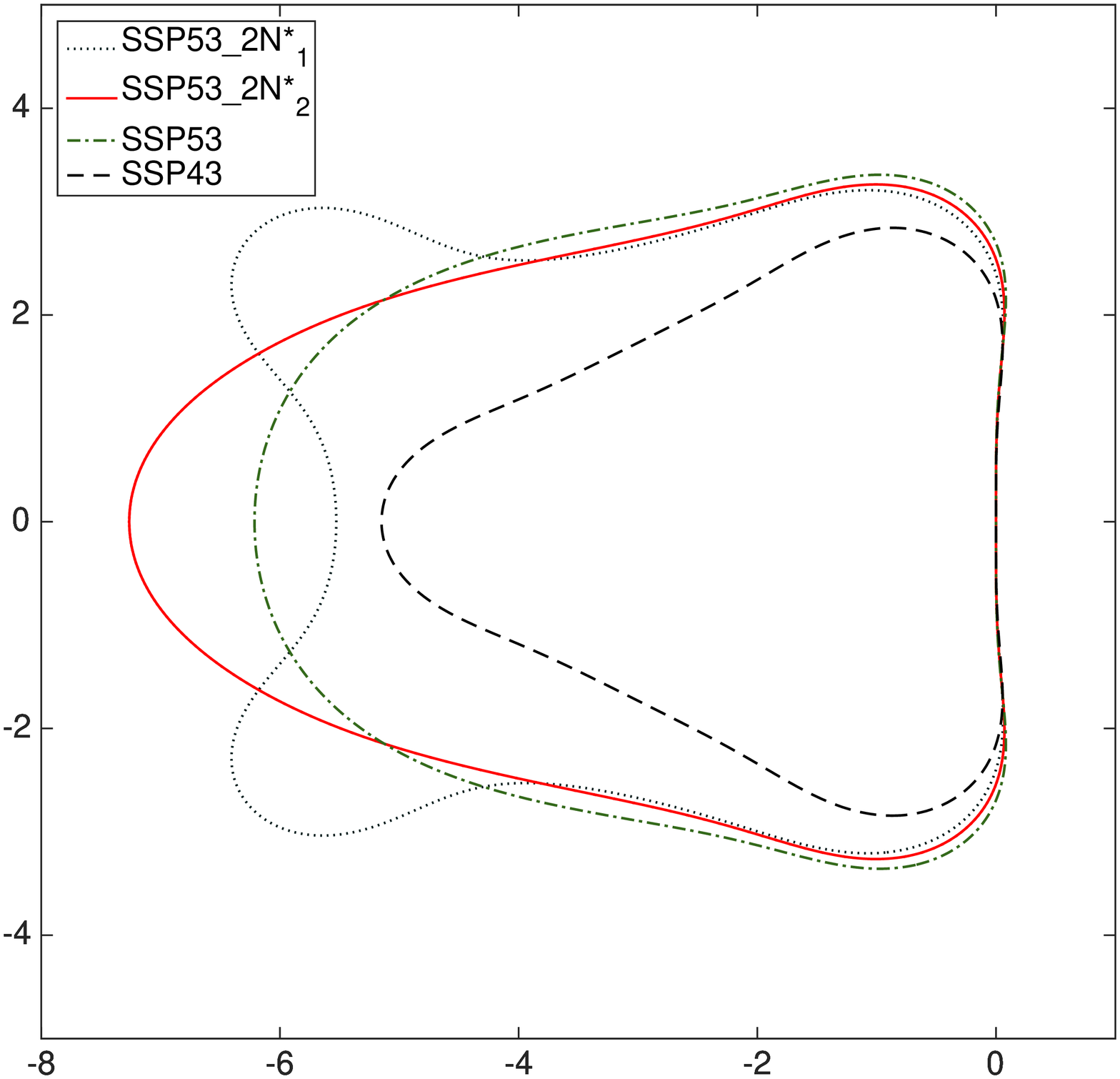}}
{\includegraphics[scale=0.25]{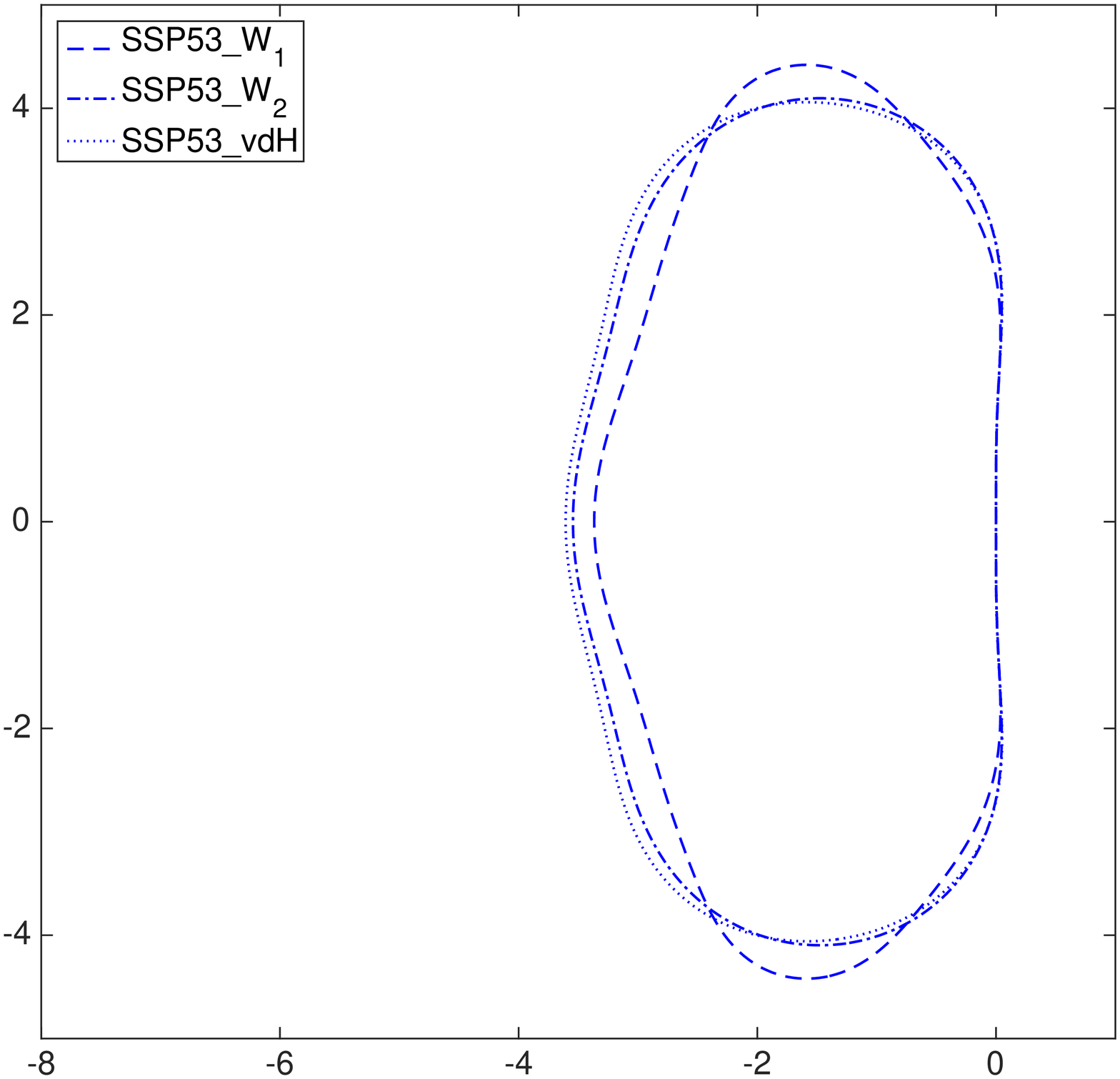}}
\caption{Stability regions for schemes SSP53\_$2N^*_1$ \eqref{RKq53new2180749}, SSP53\_$2N^*_2$ \eqref{RKq53new214}, SSP53 and SSP43 (left).  Stability regions for schemes
SSP53\_W$_1$, SSP53\_W$_2$ and SSP53\_vdH (right).}
\end{center}
\end{figure}

\medskip

\noindent{\bf Method  SSP53\_$2N^*_2$:} 
Motivated by the $2N^*$ structure of the optimum 4-stage third order SSP method \eqref{2N3ordern2}, we have considered an additional stage preserving this low-storage pattern. This is equivalent to consider the case $u=v$ and 
$w=x$ in $2N^*$ methods~\eqref{SO_rep}.
\begin{align} \label{SO_rep2}
\Lambda=
\left(
\begin{array}{cccccc}
 0 & 0 & 0 & 0 & 0& 0 \\
 1 & 0 & 0 & 0 & 0& 0 \\
 0 & 1 & 0 & 0 & 0& 0 \\
 \frac{v-x}{v} & 0 & \frac{x}{v} & 0 & 0& 0 \\
 0 & 0 & 0 & 1 & 0& 0 \\
 \frac{x-1}{x} & 0 & 0 & 0 & \frac{1}{x}& 0 \\
\end{array}
\right)\,,\qquad
\Gamma=\left(
\begin{array}{cccccc}
 0 & 0 & 0 & 0 & 0& 0 \\
 b_1v & 0 & 0 & 0 & 0 & 0\\
 0 & b_1v & 0 & 0 & 0 & 0\\
 0 & 0 & b_3x & 0 & 0& 0 \\
 0 & 0 & 0 & b_4x & 0 & 0\\
 0 & 0 & 0 & 0 & b_5 & 0\\
\end{array}
\right)\, . 
\end{align}
These methods are a subclass of methods \eqref{SO_rep} and consequently can be implemented in $2N^*$ memory registers. We have used numerical optimization techniques analogous  to \eqref{codigo}
 to get the 6 unknowns in \eqref{SO_rep2}. Proceeding in this way, we have obtained numerically
 the optimum SSP coefficient $r=2.14874$ when $a_{21}=a_{54}=0.465389$. The Butcher coefficients for
this method are
\begin{align}\label{RKq53new214}
\hbox{ 
\tabcolsep 5pt
\begin{tabular}{c|ccccc}
0 & 0 & 0 & 0 & 0 & 0 \\
0.465389 & 0.465389 & 0 & 0 & 0 & 0 \\
0.930778 & 0.465389 & 0.465389 & 0 & 0 & 0 \\
0.420414 & 0.147834 & 0.147834 & 0.124746 & 0 & 0 \\
0.885802  &  0.147834  & 0.147834  & 0.124746 & 0.465389 & 0  
 \\[0.5ex] 
\hline
     & 0.141147  & 0.141147  & 0.119103  & 0.444339 & 0.154263 
\end{tabular}      
}  
\end{align} 
The expanded coefficients for this method are given in the Appendix (see \eqref{5eo3SSP_opt2}). 
Observe that the largest value in the Butcher tableau is $a_{21}=a_{32}=a_{54}=1/r$. The 2-norm of  the coefficients in the leading term of the local error  is $0.022736$, and the stability function   is 
\[
R(z)  = 1+z+\frac{1}{2}z^2+\frac{1}{6}z^3+0.027360\, z^4 + 0.001772\, z^5\, . 
\] 
The stability region is the largest one in Figure \ref{Fig_stabR} (continuous line). The left hand side of the absolute stability interval for this method is $-7.26$. 

For completeness, in Figure \ref{Fig_stabR} (right) we show the stability regions of low-storage methods based on van der Houven and Williamson techniques. Observe that for these schemes the stability intervals are smaller than the ones in Figure \ref{Fig_stabR} (left), related to SSP(5,3) low-storage methods based on Shu-Osher matrices.

\section{Numerical experiments}\label{sec:numexp}
In this section we study the performance of the new 5-stage  third order low-storage SSP  RK methods, namely  SSP53\_$2N^*_1$ \eqref{RKq53new2180749}  and  
SSP53\_$2N^*_2$ \eqref{RKq53new214}. Our goal is to study the effective SSP coefficient when a given problem is integrated. For this purpose, we have considered the hyperbolic   Buckley-Leverett equation \eqref{BL} whose solution is Total Variation Diminishing (TVD). 

In order to compare the behaviour of the new methods,   we have also considered other 5-stage  third order low-storage SSP  RK methods from the literature. Specifically we have dealt with four optimal SSP(5,3) methods, namely  the schemes SSP53\_R  \eqref{method_Ruuth}, SSP53\_H \eqref{RKq53new2H}, SSP53$_1$ \eqref{5eo3SSP_opt1} and  SSP53$_2$  \eqref{5eo3SSP_opt2}.
The first three schemes  are $3N$ low-storage methods, while more than three memory registers are needed to implement the fourth one. Besides, we have also considered two Williamson schemes, namely the methods
SSP53\_W$_1$ \eqref{Williamson1} and 
SSP53\_W$_2$    \eqref{Williamson2}, and the van der Houwen method    SSP53\_vdH  \eqref{vdHouwen}. 
Finally, we have also considered the method SSP43 \eqref{2N3ordern2}, the  optimum 4-stage  third order  SSP  RK method. References, expanded coefficients and more details about  these methods can be seen in the Appendix section \ref{appendix}.

 \begin{table}[h]
\begin{center}
    \begin{tabular}{ |lc|c|c|c|c|c|c | }
    \hline
%    \multicolumn{8}{|c|}{2N methods}  \\ \hline
    & &  {\bf Stages} &  {\bf Order} &  {\bf SSP} & {\bf Observed SSP} & {\bf Error} & {\bf Number of}  \\
    & & & &   {\bf coefficient}  & {\bf  coefficient} & {\bf  constant} & {\bf registers}   \\ \hline\hline 
%  SSPLS53*3.1  &$5$ & $3$ & $2.14559$ & $ 2.36$   &$0.0236300$&$2N*$-VdH    \\\hline            
%   SSP53\_$2N{^*_2}$ & \eqref{RKq53new21469} &$5$ & $3$ & $2.1487$ & $ 2.45?$   &$2.27362 \times 10^{-2}$&$2N^*$    \\\hline            
%  SSPLS53**  &$5$ & $3$ & $2.180660$ & $ 2.--$   &$0.0278808$&$$2N-VdH&  \\\hline            SSPLS53***  &$5$ & $3$ & $2.180684$ & $ 2.--$   &$0.0279076$&$$2N-VdH&  \\          \hline
   SSP53\_$2N{^*_1}$ & \eqref{RKq53new2180749} &$5$ & $3$ & $2.1807$ & $ 2.29$   &2.78407e-02&$2N^*$       \\     \hline
      SSP53\_$2N{^*_2}$ & \eqref{RKq53new214} &$5$ & $3$ & $2.1487$ & $ 2.45$   &2.27362e-02&$2N^*$    \\\hline            
\hline
SSP53$_1$ & \eqref{5eo3SSP_opt1}  &   $5$ & $3$ & $2.6506$ &$2.96$ &1.48757e-02& $3N$     %SSP53D1
 \\   \hline
  SSP53$\_{\text R}$ & \eqref{method_Ruuth} &$5$ & $3$ & $2.6506$ & $2.90$  &1.66219e-02& $3N$    %SSP53MD  
  \\  \hline
SSP53$_2$  &\eqref{5eo3SSP_opt2}  &$5$ & $3$ & $2.6506$ & $2.78$  &1.81787e-02& $\ge 3N$    %  SSP53D2 
 \\  \hline
  SSP53$\_{\text H}$ &\eqref{RKq53new2H} &$5$ & $3$ & $2.6506$ & $ 2.72$   &1.98589e-02&$3N$     \\          \hline\hline
  SSP43 & \eqref{2N3ordern2}&$4$ & $3$ & $2$ & $ 2.04$   &3.60844e-02&$2N^*$      \\       \hline   \hline
    SSP53\_W$_1$  & \eqref{Williamson1} &$5$ & $3$ & $1$ & $ 2.04$   &2.14944e-02&$2N$-W         \\          \hline
    SSP53\_W$_2$  & \eqref{Williamson2} &$5$ & $3$ & $1.4015$ & $2.20$   &2.88494e-02&$2N$-W         \\          \hline \hline
    SSP53\_vdH  & \eqref{vdHouwen} &$5$ & $3$ & $1.4828$ & $1.96$   &2.55799e-02&$2N$-vdH         \\          \hline
 \end{tabular}
\end{center}
   \caption{Theoretical and observed SSP coefficients, error constant and number of memory registers. Top: new SSP(5,3) $2N^*$ low-storage schemes. Middle: optimal SSP(5,3) schemes. Next-to-last: optimal SSP(4,3) 
   $2N^*$ low-storage scheme. 
   End: optimal SSP 5-stage third order $2N$ Williamson and van der Houwen low-storage schemes.} 
     \label{tabla:observed}
    \end{table}

\subsection{Hyperbolic 1-dimensional Buckley-Leverett equation}
The  hyperbolic 1-dimensional Buckley-Leverett equation is defined by  (see, e.g., \cite{FeSpi_aplnum,LeVeque2002}) 
\begin{equation}\label{BL}
    \frac{\partial}{\partial t}u(x,t)+\frac{\partial}{\partial x}\Phi(u(x,t))=0 \, , \qquad \text{with} \qquad 
    \Phi(u)=\frac{3u^2}{(1-v)^2}\, .
\end{equation} 
We consider $0\le x\le 1$, $0\le t\le 1/8$, periodic boundary condition $u(0,t)=u(1,t)$ and initial condition
\[
     u(x,0)=\begin{cases} 
           0 & \text{for } 0<x\le 1/2, \\[0.5ex]
           \tfrac{1}{2} & \text{for }  \tfrac{1}{2}<x\le 1\,.
\end{cases}
\]
We semi-discretize this problem using a uniform grid with mesh-points 
$x_j=j\Delta x$
where $j = 1, 2, . . . , N$ and
$x = 1/N$, $N = 100$. We denote $U_j(t)\approx u(x_j,t)$ and we approximate \eqref{BL} by the system of ordinary differential equations
\begin{align}
U_j'(t)=\frac{1}{\Delta x}(\Phi(U_{j-1/2}(t))-\Phi(U_{j+1/2}(t))), \qquad j = 1, 2, \ldots, N, \label{ode_LB}
\end{align}
where 
$$U_{j+1/2}(t)= U_j+\frac{1}{2} \phi(\theta_j)\left(U_{j+1}-U_j\right)\, , $$
and $\phi(\theta)$ is the Koren's limiter defined by 
$$\phi(\theta)=\max\left(0, \min\left(2, \frac{2}{3}+\frac{1}{3}\theta, 2\theta\right)\right)\, , \quad \hbox{where}\quad \theta_j=\frac{U_j-U_{j-1}}{U_{j+1}-U_j}\, . 
$$
In order to  compute the observed SSP coefficient for a given explicit RK, we have integrated  \eqref{ode_LB} 
with different stepsizes, from 
$\Delta t=2\cdot 10^{-3}\ $ to $\ \Delta t=10^{-2}$. For each step size $\Delta t$, the maximal ratio of the TV-seminorm of two consecutive numerical approximations, in the time interval $[0, 1/8]$, is computed
\[
\mu(\Delta t)=\max\left\{  \frac{\|u_n\|_{TV}}{\|u_{n-1}\|_{TV}}
\mid  n\ge 1\,,\text{with } n\Delta t\le 1/8 \right\}\,.
 \]
If $\mu(\Delta t)=1$, then the explicit RK method is Total Variation Diminishing (TVD) on the interval $[0, 1/8]$,  that is ${\|u_n\|_{TV}}\le {\|u_{n-1}\|_{TV}}$ (see \cite{FeSpi_aplnum} for details). 

We have obtained that the forward Euler method is TVD for $0\leq \Delta t\leq \Delta t_{FE}^{obs} 
\simeq 0.0025$.
%, and thus the observed SSP coefficient for this scheme, denoted by $\Delta t_{FE}^{obs}$, is $0.0025$.  
For a given scheme $\mathbb{A}$,  we have repeated  this computation to obtain the value $\Delta t^{obs}_{\mathbb{A}}$ such that  $\mu(\Delta t^{obs}_{\mathbb{A}})=1$; then the quotient  $\Delta t^{obs}_{\mathbb{A}} /\Delta t_{FE}^{obs}$ gives the observed SSP coefficient of  scheme $\mathbb{A}$, that we will denote by $c_\mathbb{A}^{obs}$. 

In Table \ref{tabla:observed} we have summarized some information on the schemes considered and the numerical results obtained. 
More precisely, for each scheme, we give the number of stages $s$, the order $p$, the theoretical and observed SSP coefficients, the $\|\cdot\|_2$-error constant obtained from the residuals of  the $p+1$ order conditions and, finally, the number of memory registers needed for the implementation.
\begin{figure}[h]
\centering
\includegraphics[viewport=3cm 2.5cm 38cm 14.5cm, clip,scale=0.3]{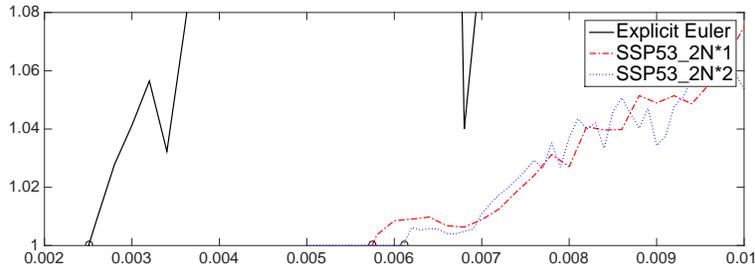} 
\caption{Ratio $\mu(\Delta t)$ for the forward Euler method and the new SSP(5,3) explicit Runge-Kutta methods ($2N^*$ low-storage schemes).} 
   \label{Fig:new_LSmethods}
\end{figure}

In Figure    \ref{Fig:new_LSmethods}  we show $\Delta t_{FE}^{obs}$ and $\Delta t^{obs}_{\mathbb{A}}$ for the new    5-stage third order SSP explicit RK methods SSP53\_$2N{^*_1}$ and 
 SSP53\_$2N{^*_2}$. In both cases, the observed SSP 
 coefficient $c^{obs}_{\mathbb{A}}$, 2.29 and 2.45,  is better than the theoretical one, 2.18 and 2.15, respectively. This increase is more relevant (14\%) for method SSP53\_$2N{^*_2}$ than for method 
SSP53\_$2N{^*_1}$ (5\%). Observe that the   $\|\cdot\|_2$-error constant obtained for method SSP53\_$2N{^*_2}$ is smaller that the one for method SSP53\_$2N{^*_1}$ (see table \ref{tabla:observed}).
 \begin{figure}[ht]
\begin{center}
\includegraphics[viewport=3cm 2.5cm 38cm 14.5cm, clip,scale=0.3]{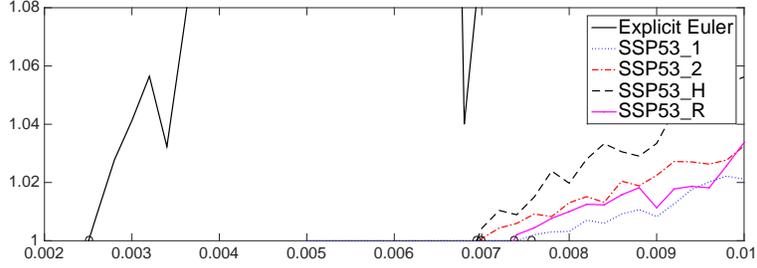}
\end{center}
\caption{Ratio $\mu(\Delta t)$ for the forward Euler method and the four SSP(5,3) methods 
considered:  SSP53$_1$ \eqref{5eo3SSP_opt1}, SSP53$_2$ \eqref{5eo3SSP_opt2}, SSP53\_H \eqref{RKq53new2H} and SSP53\_R \eqref{method_Ruuth}.}
   \label{Fig:SSP53methods}
\end{figure}

In Figure    \ref{Fig:SSP53methods}  we show $\Delta t_{FE}^{obs}$ and $\Delta t^{obs}_{\mathbb{A}}$ for the four  optimal    SSP(5,3)   methods considered in this paper. In all the cases, the observed SSP 
 coefficient $c^{obs}_{\mathbb{A}}$  is better than the theoretical one.
This increase is more relevant (11.6\%) for method SSP53$_1$ than for the other optimal    SSP(5,3)   methods. Observe that the   $\|\cdot\|_2$-error constant obtained for method SSP53$_1$ is lower that the error constant obtained for the other optimal    SSP(5,3)   methods (see table \ref{tabla:observed}).
\begin{figure}[ht]
\begin{center}
\includegraphics[viewport=3cm 2.5cm 38cm 14.5cm, clip,scale=0.3]{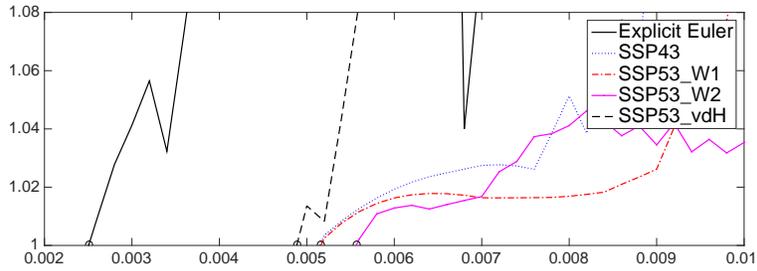}
\end{center}
\caption{Ratio $\mu(\Delta t)$ for the forward Euler method and  other  $2N$ low-storage Runge-Kutta methods:  SSP43 \eqref{2N3ordern2},  SSP53\_W$_1$   \eqref{Williamson1},
SSP53\_W$_2$, \eqref{Williamson2} and SSP53\_vdH   \eqref{vdHouwen}.} 
   \label{Fig:LSmethods}
\end{figure}

Finally, in Figure    \ref{Fig:LSmethods}  we show $\Delta t_{FE}^{obs}$ and $\Delta t^{obs}_{\mathbb{A}}$ for the   the optimal SSP(4,3) method and for the Williamson and van der Houven methods considered in this paper. For method SSP(4,3) the observed and the theoretical SSP coefficient are almost identical. However, for SSP53\_W$_1$, SSP53\_W$_2$ and  SSP53\_vdH methods the observed   coefficient is significantly better than the theoretical one. Despite this increase, the observed SSP coefficient is smaller than the ones for the new $2N^*$ low-storage methods SSP53\_$2N{^*_1}$ and    SSP53\_$2N{^*_2}$ constructed in this paper.

\section{Conclusions} 
In this paper we have studied third order explicit SSP RK methods that can be implemented in $2N^*$ memory registers; besides the SSP coefficient of the schemes, we are interested in some other relevant properties. The optimal SSP(4,3) scheme belongs to this class of methods but, due to its uniqueness, any other relevant properties of the method  are determined.  

We have studied the family of optimum SSP(5,3) methods and we have proven that they cannot be implemented in $2N$ memory registers. Proposition \ref{prop_3N} shows us that  these methods can be implemented in $3N$ memory registers  just in same cases.

Next, we have constructed new  SSP(5,3)   $2N^*$ low-storage   explicit RK schemes. We have exploited the sparse structure of the Shu-Osher matrices to get SSP(5,3)  methods that  can be implemented with $2N$ memory registers, even 
if we have to retain the previous time step approximation. 

Finally, we have tested the performance of the methods for the Buckley-Leverett equation.  With regard to the difference between the theoretical and observed SSP coefficients, the numerical experiments done show that:
\begin{itemize}
    \item For the optimal SSP(4,3) method, the theoretical stepsize restriction for the Buckley-Leverett equation is sharp. However, for the rest of the methods tested, the observed stepsize restrictions are larger than the ones ensured by the SSP theory. 
    \item For the four optimal SSP(5,3) schemes considered in the paper, the method SSP53$_1$ \eqref{5eo3SSP_opt1}, with the largest observed SSP coefficient, is also the one with smallest $\|\cdot\|_2$-error constant.  
    Besides, the observed SSP coefficients for the new schemes SSP53\_$2N{^*_1}$   \eqref{RKq53new2180749} and SSP53\_$2N{^*_2}$   \eqref{RKq53new214}   are a $5\%$ and $14\%$ larger, respectively, than the theoretical ones. It turns out that the $\|\cdot\|_2$-error constant is smaller for scheme SSP53\_$2N{^*_2}$   \eqref{RKq53new214}. These facts lead us to conjecture that the smaller $\|\cdot\|_2$-error constant is, the larger the observed SSP coefficient.
\end{itemize}

Even though the new methods do not achieve the optimum SSP(5,3) coefficient, namely $r=2.65$, the numerical experiments show that they have good observed SSP coefficient~$c^{obs}_{\mathbb{A}}$ for the problem tested. Besides, they have other relevant properties, as large stability region,  improving these features with respect to other $2N^*$ low-storage SSP RK methods. 
Furthermore, scheme SSP53\_$2N{^*_2}$   \eqref{RKq53new214}, the one with larger stability interval and smaller $\|\cdot\|_2$-error constant, gives slighter better results than method SSP53\_$2N{^*_1}$   \eqref{RKq53new2180749}. This fact shows the relevance of these additional properties in the performance of SSP methods.

\section{Appendix}\label{appendix}
In this section we show the coefficients of the methods considered in the numerical experiments: The four SSP53 optimal methods,  the two $2N^*$ low-storage schemes  obtained in this paper, and finally the  Williamson and van der Houwen low-storage type methods.

\subsection{Optimal SSP53  schemes}
First we give the coefficients of  four optimal  SSP(5,3) methods considered in this paper, namely   SSP53\_R, SSP53\_H, SSP53$_1$ and SSP53$_2$.  For all them, the stability function is given in \eqref{stab_funct} and the SSP coefficient is approximately $r=2.65$.

For each method  we show the Butcher coefficients and below the  Shu-Osher form $(\Lambda, \Gamma)$ such that 
$ \Lambda e = (1,0,0,0,0,0)^t$ and matrix $\Gamma$ is subdiagonal. To get this subdiagonal structure we can  use transformation \eqref{inv} or the expressions in (\ref{SO_opt})-(\ref{eq:l3}). 
In the  first three methods the sparsity of matrix $ \Lambda$ allows a  $3N$  implementation. However, for the last one, $4N$ memory registers are needed.

\subsubsection*{\textbf{Scheme SSP53\_R}}
This method was numerically obtained in  \cite{ruuth2006global}. The 2-norm of the coefficients in the leading term of the local error  is $0.0166219$. 
%This method can be implemented in  $3N$  memory registers.

\begin{align} \label{method_Ruuth}
\hbox{ \footnotesize 
\tabcolsep 5pt
\begin{tabular}{c|ccccc}
0 & 0  & 0  & 0  & 0  & 0    \\ [0.25ex]
0.377268915331368 & 0.377268915331368 & 0  & 0   & 0  & 0     \\[0.5ex]
0.754537830662736 &  0.377268915331368 & 0.377268915331368 & 0  & 0  & 0    \\[0.5ex]
0.728985661612186 &  0.242995220537395  & 0.242995220537395  & 0.242995220537395 & 0  & 0      \\[0.5ex]
0.699226135931669 &  0.153589067695126  & 0.153589067695126  & 0.153589067695126 & 0.23845893284629 & 0    \\[0.5ex]
\hline
&  0.206734020864804  & 0.206734020864804  & 0.117097251841844 & 0.18180256012014 & 0.287632146308408  
  \end{tabular}      
} 
\end{align}

\begin{equation*}%\label{method_Ruuth_SO}
\Lambda =\left( 
\begin{array}{*{5}{c@{\hspace{.25cm}}}c}   
0 & 0 & 0 & 0 & 0 & 0 \\
1& 0 & 0 & 0 & 0 & 0 \\
0& 1& 0 & 0 & 0 & 0 \\
\lambda_{41} & 0& \lambda_{43} & 0 & 0 & 0 \\
\lambda_{51} & 0& 0& \lambda_{54} & 0 & 0 \\
0& 0& \lambda_{63} & 0& \lambda_{65} & 0
\end{array}
\right)\, , \qquad
  \Gamma  =\left( 
\begin{array}{*{5}{c@{\hspace{.25cm}}}c}
0 & 0 & 0 & 0 & 0 & 0 \\
\gamma_{21} & 0 & 0 & 0 & 0 & 0 \\
0& \gamma_{32} & 0 & 0 & 0 & 0 \\
0& 0& \gamma_{43} & 0 & 0 & 0 \\ 
0& 0& 0& \gamma_{54} & 0 & 0 \\
0& 0& 0& 0& \gamma_{65} & 0
\end{array}
\right)\,.
\end{equation*}

\medskip\noindent
{\small $\lambda_{41}=0.355909775063327$\,, 
$\lambda_{43}=0.644090224936674$\,, \\
$\lambda_{51}=0.367933791638137$\,, 
$\lambda_{54}=0.632066208361863$\,, \\
$\lambda_{63}=0.237593836598569$\,, 
$\lambda_{65}=0.762406163401431$\,;\\
$\gamma_{21}=0.377268915331368$\,, 
$\gamma_{32}= 0.377268915331368$\,, 
$\gamma_{43}=0.242995220537396$\,, 
$\gamma_{54}= 0.238458932846290$\,, \\
$\gamma_{65}= 0.287632146308408$\,.}

\subsubsection*{\textbf{Scheme SSP53\_H}}
%The coefficients of this method have been obtained %........????????????............
The 2-norm of the coefficients in the leading term of the local error  is  $0.019859$. 
%This method can be implemented in  $3N$  memory registers.
\begin{align} \label{RKq53new2H}
\hbox{  \footnotesize
\tabcolsep 5pt
\begin{tabular}{c|ccccc}
  0  &  0 & 0&  0&  0&  0     \\ [0.25ex]
   0.377268915331368   & 0.377268915331368 & 0& 0   & 0 & 0   \\[0.5ex]
    0.754537830662737  & 0.377268915331368 & 0.377268915331368& 0   & 0 & 0   \\[0.5ex]
  0.782435937433493  & 0.260811979144498 & 0.260811979144498& 0.260811979144498   & 0 & 0    \\[0.5ex]
0.622731084668631  & 0.219153436331987 & 0.117097251841844& 0.117097251841844   & 0.169383144652957 & 0   \\[0.5ex]
\hline
   & 0.219153436331987 & 0.117097251841844& 0.117097251841844   & 0.169383144652957 & 0.377268915331368   \\[0.5ex]
 \end{tabular}      
}   
\end{align}
 
\begin{equation*}%\label{method_Ruuth_SO}
\Lambda =\left( 
%\begin{array}{c@{\hspace{.25cm}}c@{\hspace{.25cm}}c@{\hspace{.25cm}}c@{\hspace{.25cm}}
\begin{array}{*{5}{c@{\hspace{.25cm}}}c}   
0 & 0 & 0 & 0 & 0 & 0 \\
1& 0 & 0 & 0 & 0 & 0 \\
0& 1& 0 & 0 & 0 & 0 \\
\lambda_{41} & 0& \lambda_{43} & 0 & 0 & 0 \\
\lambda_{51} & \lambda_{52}& 0& \lambda_{54} & 0 & 0 \\
0& 0&  0 & 0& 1 & 0
\end{array}
\right)\, , \qquad
  \Gamma  =\left( 
\begin{array}{*{5}{c@{\hspace{.25cm}}}c}
0 & 0 & 0 & 0 & 0 & 0 \\
\gamma_{21} & 0 & 0 & 0 & 0 & 0 \\
0& \gamma_{32} & 0 & 0 & 0 & 0 \\
0& 0& \gamma_{43} & 0 & 0 & 0 \\ 
0& 0& 0& \gamma_{54} & 0 & 0 \\
0& 0& 0& 0& \gamma_{65} & 0
\end{array}
\right)\,.
\end{equation*}

\medskip\noindent
{\small $\lambda_{41}=0.308684154602513$\,, 
$\lambda_{43}=0.691315845397487$\,, \\
$\lambda_{51}=0.280514990468574$\,, 
$\lambda_{52}=0.270513101776498$\,, 
$\lambda_{54}=0.448971907754928$\,;\\
$\gamma_{21}=0.377268915331368$\,, 
$\gamma_{32}= 0.377268915331368$\,, 
$\gamma_{43}=0.260811979144498$\,, 
$\gamma_{54}= 0.169383144652957$\,, \\
$\gamma_{65}= 0.377268915331368$\,. }

\subsubsection*{\textbf{Scheme SSP53$_1$}}
The coefficients of this method have been obtained with    the code RK--Opt \cite{RKOpt}. The 2-norm of the coefficients in the leading term of the local error  is  $0.014876$.  

\begin{align} \label{David1}
\hbox{  \footnotesize
\tabcolsep 5pt
\begin{tabular}{c|ccccc}
  0  &  0 & 0&  0&  0&  0     \\ [0.25ex]
   0.377268915331368   & 0.377268915331368 & 0& 0   & 0 & 0   \\[0.5ex]
    0.754537830662736  & 0.377268915331368 & 0.377268915331368& 0   & 0 & 0   \\[0.5ex]
  0.488281458487577  & 0.162760486162526 & 0.162760486162526& 0.162760486162526   & 0 & 0    \\[0.5ex]
0.788667948667632  & 0.148318743330765 & 0.148299726283723& 0.148299726283723   & 0.343749752769421 & 0   \\[0.5ex]
\hline
   & 0.196490186861586 & 0.117097251841844& 0.117097251841844   & 0.271424313309946 & 0.297890996144780   \\[0.5ex]
 \end{tabular}      
}   
\end{align}

\begin{equation*}%\label{}
\Lambda =\left( 
\begin{array}{*{5}{c@{\hspace{.25cm}}}c}   
0 & 0 & 0 & 0 & 0 & 0 \\
1& 0 & 0 & 0 & 0 & 0 \\
0& 1& 0 & 0 & 0 & 0 \\
\lambda_{41} & 0& \lambda_{43} & 0 & 0 & 0 \\
\lambda_{51} & \lambda_{52}& 0& \lambda_{54} & 0 & 0 \\
0& \lambda_{62}&  0 & 0& \lambda_{65} & 0
\end{array}
\right)\, , \qquad
  \Gamma  =\left( 
\begin{array}{*{5}{c@{\hspace{.25cm}}}c}
0 & 0 & 0 & 0 & 0 & 0 \\
\gamma_{21} & 0 & 0 & 0 & 0 & 0 \\
0& \gamma_{32} & 0 & 0 & 0 & 0 \\
0& 0& \gamma_{43} & 0 & 0 & 0 \\ 
0& 0& 0& \gamma_{54} & 0 & 0 \\
0& 0& 0& 0& \gamma_{65} & 0
\end{array}
\right)\,.
\end{equation*}

\medskip\noindent
{\small $\lambda_{41}=0.568582304164742$\,, 
$\lambda_{43}=0.431417695835258$\,, \\
$\lambda_{51}=0.088796463619276$\,, 
$\lambda_{52}=0.000050407140024$\,, 
$\lambda_{54}=0.911153129240700$\,,\\
$\lambda_{62}=0.210401429751688$\,,
$\lambda_{65}=0.789598570248313$\,;\\
$\gamma_{21}=0.377268915331368$\,, 
$\gamma_{32}= 0.377268915331368$\,, 
$\gamma_{43}=0.162760486162526$\,, 
$\gamma_{54}= 0.343749752769421$\,, \\
$\gamma_{65}= 0.297890996144780$\,. }

\subsubsection*{\textbf{Scheme SSP53$_2$}}
The coefficients of this method have been obtained with    the code RK--Opt \cite{RKOpt}. The 2-norm of the coefficients in the leading term of the local error  is  $0.018179$. 
%This method can be implemented in $4N$ memory registers.
\begin{align} \label{David2}
\hbox{ \footnotesize 
\tabcolsep 5pt
\begin{tabular}{c|ccccc}
  0  &  0 & 0&  0&  0&  0     \\ [0.25ex]
   0.377268915331368   & 0.377268915331368 & 0& 0   & 0 & 0   \\[0.5ex]
    0.754537830662737  & 0.377268915331368 & 0.377268915331368& 0   & 0 & 0   \\[0.5ex]
  0.756398701991139  & 0.252132900663713 & 0.252132900663713& 0.252132900663713   & 0 & 0    \\[0.5ex]
0.659994122684924  & 0.188434549340417 & 0.134873511860921& 0.134873511860921   & 0.201812549622665 & 0   \\[0.5ex]
\hline
  & 0.213322822390311 & 0.166821102311173& 0.117097251841844   & 0.175213758594633 & 0.327545064862039   \\[0.5ex]
 \end{tabular}      
}   
\end{align}
\begin{equation*}%\label{}
\Lambda =\left( 
\begin{array}{*{5}{c@{\hspace{.25cm}}}c}   
0 & 0 & 0 & 0 & 0 & 0 \\
1& 0 & 0 & 0 & 0 & 0 \\
0& 1& 0 & 0 & 0 & 0 \\
\lambda_{41} & 0& \lambda_{43} & 0 & 0 & 0 \\
\lambda_{51} & \lambda_{52}& 0& \lambda_{54} & 0 & 0 \\
0& 0&\lambda_{63} & 0& \lambda_{65} & 0
\end{array}
\right)\, , \qquad
  \Gamma  =\left( 
\begin{array}{*{5}{c@{\hspace{.25cm}}}c}
0 & 0 & 0 & 0 & 0 & 0 \\
\gamma_{21} & 0 & 0 & 0 & 0 & 0 \\
0& \gamma_{32} & 0 & 0 & 0 & 0 \\
0& 0& \gamma_{43} & 0 & 0 & 0 \\ 
0& 0& 0& \gamma_{54} & 0 & 0 \\
0& 0& 0& 0& \gamma_{65} & 0
\end{array}
\right)\,.
\end{equation*}

\medskip\noindent
{\small $\lambda_{41}=0.331689173378475$\,, 
$\lambda_{43}=0.668310826621525$\,, \\
$\lambda_{51}=0.323099315304423$\,, 
$\lambda_{52}=0.141970449466930$\,, 
$\lambda_{54}=0.534930235228647$\,,\\
$\lambda_{63}=0.131799489564770$\,,
$\lambda_{65}=0.868200510435230$\,;\\
$\gamma_{21}=0.377268915331368$\,, 
$\gamma_{32}= 0.377268915331368$\,, 
$\gamma_{43}=0.252132900663713$\,, 
$\gamma_{54}= 0.201812549622665$\,, \\
$\gamma_{65}= 0.327545064862039$\,. }

\subsection{Optimal SSP 5-stage third order $2N^*$  schemes }
In this section we give the coefficients the two optimal   SSP 5-stage third order methods that can be implemented in $2N^*$ memory registers. For each one we show the Butcher coefficients and below the  Shu-Osher form $(\Lambda, \Gamma)$ such that $ \Lambda e = (1,0,0,0,0,0)^t$ and matrix $\Gamma$ is subdiagonal.  In both cases all the entries are in the first subdiagonal and the first column of $\Lambda$.

\subsubsection*{\textbf{Scheme SSP53\_$2N{^*_1}$}}
This is the optimum method of the family \eqref{SO}. The coefficient SSP is $r=2.180749177932739$ and the stability function is
\[
R(z)  = 1+z+\frac{1}{2}z^2+\frac{1}{6}z^3+0.027360346839505386\, z^4 + 
0.0017718595675709542\, z^5\, . 
\]
The coefficients of this method have been obtained by solving the optimization 
problem \eqref{codigo}. 
The 2-norm of the coefficients in the leading term of the local error  is $0.027840660448808976$.
\begin{align}\label{5eo3SSP_opt1}
\hbox{ \footnotesize
\tabcolsep 5pt
\begin{tabular}{c|ccccc}
0  &0 & 0 & 0 & 0 & 0 \\
0.443568244942995 & 0.443568244942995 & 0 & 0 & 0 & 0 \\
0.734679665016762 & 0.443568244942995 & 0.291111420073766 & 0 & 0 & 0 \\
1.005292266294979 & 0.443568244942995 & 0.291111420073766 & 0.27061260127822 & 0 & 0 \\
0.541442494648948 & 0.190111792195291 & 0.124769332407581 & 0.11598361065329 & 0.110577759392786 & 0  
\\[0.5ex]
\hline
      & 0.190111792195291 & 0.124769332407581 & 0.11598361065329 & 0.110577759392786 & 0.4585575053510519
     \end{tabular}      
}  
\end{align}

\begin{equation*}%\label{}
\Lambda =\left( 
\begin{array}{*{5}{c@{\hspace{.25cm}}}c}   
0 & 0 & 0 & 0 & 0 & 0 \\
1& 0 & 0 & 0 & 0 & 0 \\
0& 1& 0 & 0 & 0 & 0 \\
0 & 0& 1 & 0 & 0 & 0 \\
\lambda_{51} & 0& 0& \lambda_{54} & 0 & 0 \\
0& 0& 0 & 0&1 & 0
\end{array}
\right)\, , \qquad
  \Gamma  =\left( 
\begin{array}{*{5}{c@{\hspace{.25cm}}}c}
0 & 0 & 0 & 0 & 0 & 0 \\
\gamma_{21} & 0 & 0 & 0 & 0 & 0 \\
0& \gamma_{32} & 0 & 0 & 0 & 0 \\
0& 0& \gamma_{43} & 0 & 0 & 0 \\ 
0& 0& 0& \gamma_{54} & 0 & 0 \\
0& 0& 0& 0& \gamma_{65} & 0
\end{array}
\right)\,.
\end{equation*}

\medskip\noindent
{\small $\lambda_{51}=0.571403511494104$\,, 
$\lambda_{54}=0.428596488505896$\,;\\
$\gamma_{21}=0.443568244942995$\,, 
$\gamma_{32}= 0.291111420073766$\,, 
$\gamma_{43}=0.270612601278217$\,, 
$\gamma_{54}= 0.110577759392786$\,, \\
$\gamma_{65}= 0.458557505351052$\,. }

\subsubsection*{Scheme SSP53\_$2N{^*_2}$}
This is the optimum method of the family \eqref{SO_rep2}. The coefficient SSP is 
$r=2.1487419827223833$ and the stability function is
\[
R(z)  = 1+z+\frac{1}{2}z^2+\frac{1}{6}z^3+0.029448369208272717\, z^4 + 
0.0019397052596758003\, z^5\, . 
\]
The 2-norm of the coefficients in the leading term of the local error  is $0.0227362$.
\begin{align}\label{5eo3SSP_opt2}
\hbox{ \footnotesize
\tabcolsep 5pt
\begin{tabular}{c|ccccc}
0 & 0 & 0 & 0 & 0 & 0 \\
0.465388589249323 & 0.465388589249323 & 0 & 0 & 0 & 0 \\
0.930777178498646 & 0.465388589249323 & 0.465388589249323 & 0 & 0 & 0 \\
0.420413812847710 & 0.147834007766856 & 0.147834007766856 & 0.124745797313998 & 0 & 0 \\
0.885802402097033 &  0.147834007766856 & 0.147834007766856 & 0.124745797313998 & 0.465388589249323 & 0  
 \\[0.5ex] 
\hline
     & 0.141147331533922 & 0.141147331533922 & 0.119103423338902 & 0.444338609844587 & 0.154263303748666
\end{tabular}      
}  
\end{align}

\begin{equation*}%\label{}
\Lambda =\left( 
\begin{array}{*{5}{c@{\hspace{.25cm}}}c}   
0 & 0 & 0 & 0 & 0 & 0 \\
1& 0 & 0 & 0 & 0 & 0 \\
0& 1& 0 & 0 & 0 & 0 \\
\lambda_{41}  & 0& \lambda_{43}  & 0 & 0 & 0 \\
0& 0& 0& 0 & 1 & 0 \\
\lambda_{61}& 0& 0 & 0&\lambda_{65} & 0
\end{array}
\right)\, , \qquad
  \Gamma  =\left( 
\begin{array}{*{5}{c@{\hspace{.25cm}}}c}
0 & 0 & 0 & 0 & 0 & 0 \\
\gamma_{21} & 0 & 0 & 0 & 0 & 0 \\
0& \gamma_{32} & 0 & 0 & 0 & 0 \\
0& 0& \gamma_{43} & 0 & 0 & 0 \\ 
0& 0& 0& \gamma_{54} & 0 & 0 \\
0& 0& 0& 0& \gamma_{65} & 0
\end{array}
\right)\,.
\end{equation*}

\medskip\noindent
{\small $\lambda_{41}=0.682342861037239$\,, 
$\lambda_{43}=0.317657138962761$\,, \\
$\lambda_{61}=0.045230974482400$\,, 
$\lambda_{65}=0.954769025517600$\,;\\
$\gamma_{21}=0.465388589249323$\,, 
$\gamma_{32}= 0.465388589249323$\,, 
$\gamma_{43}=0.124745797313998$\,, 
$\gamma_{54}= 0.465388589249323$\,, \\
$\gamma_{65}= 0.154263303748666$\,. }

\subsection{Williamson and van der Houwen low-storage type methods}
In this paper we have also considered some optimal Williamson and van der Houwen low-storage type methods obtained in \cite{spiteri2002nco} and \cite{ruuth2006global}. These are  $2N$ low-storage methods. Our interest relies  on the five-stage third-order methods.
\subsubsection*{Scheme SSP53\_W${_1}$}
In \cite{spiteri2002nco}  some optimal Williamson low-storage type method are numerically obtained. Here we show the coefficients of the optimal 5-stages third order method. The SSP coefficient  is 
$r=1$ and the stability function is
\[
R(z)  = 1+z+\frac{1}{2}z^2+\frac{1}{6}z^3+0.028737614071812287\, z^4 + 
0.0037729266088722306\, z^5\, . 
\]
The 2-norm of the coefficients in the leading term of the local error  is $0.0214944$.    
%\[
%R(z)  = 1+z+\frac{1}{2}z^2+\frac{1}{6}z^3+	0.0287376\, z^4+0.00377293\,z^5
%\]
and the Butcher coefficients are 
\begin{align} \label{Williamson1}
\hbox{  \footnotesize
\tabcolsep 5pt
\begin{tabular}{c|ccccc}
  $0$  &  $0$ & $0$&  $0$&  $0$&  $0$     \\ [0.25ex]
  $ 0.67892607116139 $  & $0.67892607116139$ & $0$& $0$   & $0$ & $0$   \\[0.5ex]
   $ 0.34677649493991$  & $0.14022991560621$ & $0.20654657933371$& $0$   & $0$ & $0$   \\[0.5ex]
  $0.66673359500982$  & $0.20569370073026$ & $0.18144649137471$& $0.27959340290485$   & $0$ & $0$    \\[0.5ex]
$0.76590087429032$  & $0.16104646283838$ & $0.19856511041100$& $0.08890670263481$   & $0.31738259840613$ & $0$   \\[0.5ex]
\hline
     & $0.19215670424132$ & $0.18663683901393$& $0.22177739201759$   & $0.09623007655432$ & $0.30319904778284$   \\[0.5ex]
 \end{tabular}      
} 
\end{align}

\subsubsection*{Scheme SSP53\_W${_2}$}
Below we show the coefficients of the optimal 5-stage  third order method numerically obtained in \cite{ruuth2006global}. The SSP coefficient  is 
$r=1.40154693827206$ and the stability function is
\[
R(z)  = 1+z+\frac{1}{2}z^2+\frac{1}{6}z^3+0.030867245346137964\, z^4 + 
0.003908575831813585\, z^5\, . 
\]
The 2-norm of the coefficients in the leading term of the local error  is $0.0288494$.    
\begin{align} \label{Williamson2}
\hbox{  \footnotesize
\tabcolsep 5pt
\begin{tabular}{c|ccccc}
  $0$  &  $0$ & $0$&  $0$&  $0$&  $0$     \\ [0.25ex]
  $ 0.713497331193829 $  & $0.713497331193829$ & $0$& $0$   & $0$ & $0$   \\[0.5ex]
   $ 0.133505249805329$  & $0.133505249805329$ & $0.133505249805329$& $0$   & $0$ & $0$   \\[0.5ex]
  $0.980507830804488$  & $0.133505249805329$ & $0.133505249805329$& $0.713497331193829$   & $0$ & $0$    \\[0.5ex]
$0.566169290867790$  & $0.133505249805329$ & $0.133505249805329$& $0.149579395628566$   & $0.149579395628565$ & $0$   \\[0.5ex]
\hline
     & $0.133505249805329$ & $0.133505249805329$& $0.216758180868589$   & $0.131760203399484$ & $0.384471116121269$   \\[0.5ex]
 \end{tabular}      
} 
\end{align}

\subsubsection*{Scheme SSP53\_vdH}
This is the optimal  5-stage  third order van der Houwen low-storage method numerically obtained in \cite{ruuth2006global}. The SSP coefficient  is 
$r=1.482840341885634$ and the stability function is
\[
R(z)  = 1+z+\frac{1}{2}z^2+\frac{1}{6}z^3+0.030977632110278555\, z^4 + 
0.003801134386056876\, z^5\, . 
\]
The 2-norm of the coefficients in the leading term of the local error  is $0.02557995243600524$.    
\begin{align} \label{vdHouwen}
\hbox{  \footnotesize
\tabcolsep 5pt
\begin{tabular}{c|ccccc}
  $0$  &  $0$ & $0$&  $0$&  $0$&  $0$     \\ [0.25ex]
  $ 0.674381436593749 $  & $0.674381436593749$ & $0$& $0$   & $0$ & $0$   \\[0.5ex]
   $ 0.291120326368482$  & $0.174481959220521$ & $0.116638367147961$& $0$   & $0$ & $0$   \\[0.5ex]
  $0.965501762962231$  & $0.174481959220521$ & $0.116638367147961$& $0.674381436593749$   & $0$ & $0$    \\[0.5ex]
$0.617111102246386$  & $0.174481959220521$ & $0.116638367147961$& $0.162995387938952$   & $0.162995387938952$ & $0$   \\[0.5ex]
\hline
     & $0.174481959220521$ & $0.116638367147961$& $0.162995387938952$   & $0.106256369067643$ & $0.439627916624922$   \\[0.5ex]
 \end{tabular}      
} 
\end{align}

\end{document}